\newtheorem{theorem}{Theorem}[section]
\newtheorem{lemma}[theorem]{Lemma}
\newtheorem{corollary}[theorem]{Corollary}
\newtheorem{theoremletter}{Theorem}
\newtheorem{corollaryletter}{Corollary}
 \theoremstyle{definition}
 \newtheorem{definition}[theorem]{Definition}
 \newtheorem{example}[theorem]{Example}
  \newtheorem*{example*}{Example}
\numberwithin{equation}{section}
\newcommand {\N}{\mathbb{N}}
\newcommand{\HH}{\mathcal{H}}
\newcommand\numberthis{\addtocounter{equation}{1}\tag{\theequation}}
\DeclareMathOperator{\Ker}{Ker}
\DeclareMathOperator{\Id}{Id}
\begin{document}
\title[Images of morphisms of symbolic varieties]{On images of subshifts under injective morphisms of symbolic varieties}   
\author[Xuan Kien Phung]{Xuan Kien Phung}
\email{phungxuankien1@gmail.com}
\subjclass[2010]{14A10, 14L10, 37B10, 37B15, 37B51, 68Q80}
\keywords{subshift of finite type, sofic shift, algebraic variety, algebraic group,  Artinian module, cellular automata, symbolic variety, symbolic group variety} 
 
\begin{abstract}
We show that the image of a subshift $X$ under various  injective morphisms of symbolic algebraic varieties over monoid universes with algebraic variety alphabets is a subshift of finite type, resp. a sofic subshift, if and only if so is $X$. Similarly, let $G$ be a countable monoid and let $A$, $B$ be Artinian modules over a ring. We prove that for every closed subshift submodule $\Sigma \subset A^G$ and every injective $G$-equivariant uniformly continuous module homomorphism $\tau \colon \Sigma \to B^G$, a subshift $\Delta \subset \Sigma$ is of finite type, resp. sofic, if and only if so is the image $\tau(\Delta)$.  
Generalizations for admissible group cellular automata over admissible Artinian group structure alphabets are also obtained.  
\end{abstract}
\date{\today}
\maketitle
  
\setcounter{tocdepth}{1}
 
\section{Introduction} 
In order to state the results, we recall basic notions of symbolic dynamics. Fix a monoid $G$, called the \emph{universe}, and two sets $A, B$, the \emph{alphabets}.  
The \emph{Bernoulli right shift action} of $G$ on $A^G$, resp. on $B^G$, is defined by 
$(g, x) \mapsto g\star x$ where $(g \star x)(h) \coloneqq x(hg)$ for all   $g \in G$ and $x \in A^G$, resp.  $x \in B^G$.    
We say that a subset $\Sigma \subset A^G$ is a \emph{subshift} of $A^G$ if it is  \emph{$G$-invariant}. The sets  $A^G$, $B^G$ are equipped with the prodiscrete topology.  
\par 
Associated with a given finite subset $D \subset G$ called a
\emph{defining window}, and a subset $P \subset A^D$, we have a \emph{subshift of finite type} $\Sigma(A^G; D,P)$ of $A^G$ which is closed with respect to the prodiscrete topology and is defined by: 
\begin{align}
\label{e:sft} 
\Sigma(A^G; D,P)  \coloneqq  \{ x \in A^G \colon (g\star x)\vert_{D} \in  P \text{ for all } g \in G\}. 
\end{align} 
\par 
For sets $E \subset F$ and $\Delta \subset A^F$, we denote  the \emph{restriction} of $\Delta$ to $E$ by $\Delta_E = \{x\vert_E\colon x \in \Delta\}$. 
Let $\Sigma \subset A^G$ be a subshift. Following the work of von Neumann \cite{neumann}, a map $\tau \colon \Sigma \to B^G$ is \emph{a cellular automaton} if it  admits a finite \emph{memory set} $M \subset G$
and a \emph{local defining map} $\mu \colon \Sigma_M \to B$ such that 
\begin{equation*} 
\label{e:local-property}
(\tau(c))(g) = \mu((g\star  c )\vert_M)  \quad  \text{for all } c \in \Sigma \text{ and } g \in G.
\end{equation*} 
\par  
Equivalently, a map $\tau \colon \Sigma \to B^G$ is a cellular automaton if and only if 
it is $G$-equivariant and uniformly continuous with respect to the prodiscrete uniform structure 
(cf.~\cite[Theorem~4.6]{csc-monoid-surj}).  
\par 
Now let $G$ be a monoid and let $X, Y$ be algebraic varieties over an algebraically closed  field $k$, i.e., reduced $k$-schemes of finite type \cite{grothendieck-ega-1-1}. We denote by $A=X(k)$ and $B=Y(k)$ the sets of $k$-points of $X$ and $Y$. Note that we can identify $X$, $Y$ with $A$, $B$ respectively. 
\par 
We say that a subshift $\Sigma \subset A^G$ is an \emph{algebraic subshift} if for every finite subset $E \subset G$, the restriction 
$\Sigma_E$ is a subvariety of $A^E$.
For an algebraic subshift $\Sigma \subset A^G$, a map $\tau \colon \Sigma \to B^G$ is called an \emph{algebraic cellular automaton} if there exists a finite memory  subset $M \subset G$ and a local defining map which is a morphism of algebraic varieties $\mu \colon \Sigma_M \to B$ such that
\[
    \tau(x)(g) = \mu((g \star x)\vert_M), \quad \text{for all } x \in \Sigma \text{ and } g \in G. 
\] 
\par 
Images of subshifts of finite type under cellular automata are called \emph{sofic subshifts} \cite{weiss-sofic-shift}. When a sofic subshift is not of finite type, we say that it is \emph{stricly sofic}. See  \cite{lind-marcus} and  \cite{weiss-sofic-shift} for various examples of stricly sofic subshifts. 
\par 
The first main result of the paper is the following theorem which asserts that the image $\tau(\Delta)$ of a subshift $\Delta$ under an injective algebraic cellular automaton $\tau$ is a subshift of finite type if and only if so is $\Delta$. In particular, the image of a subshift of finite type under an injective algebraic cellular automaton 
cannot be strictly sofic. 
\par 
More precisely, we will show in Section~\ref{s:main-morphism-symbolic-var} that: 
 
\begin{theoremletter}
\label{t:sft-image-intro-main-alg}
Let $G$ be a countable monoid and let $X, Y$ be algebraic varieties  over an uncountable algebraically closed field $k$. Let $\Sigma \subset X(k)^G$ be a closed algebraic subshift. Suppose that $\tau \colon \Sigma \to Y(k)^G$ is an injective algebraic cellular automaton. Then a subshift $\Delta \subset X(k)^G$ contained in $\Sigma$ is of finite type, resp. sofic, if and only if so is the image subshift  $\tau(\Delta) \subset Y(k)^G$.    
\end{theoremletter}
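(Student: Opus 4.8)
The plan is to reduce the "finite type" equivalence to a statement about restrictions of subshifts being eventually determined by a single window, and then bootstrap the "sofic" equivalence from the "finite type" one. Since $\tau$ is an algebraic cellular automaton, it is $G$-equivariant and uniformly continuous, so on each finite window $E$ it is induced by a morphism of varieties $\tau_E \colon \Sigma_{E'} \to B^E$ for a suitable finite $E' \supset E$ (enlarge by the memory set $M$). The key input I expect to use is a descending-chain / constructibility argument: the injectivity of $\tau$ on $\Sigma$, combined with the fact that $k$ is uncountable and algebraically closed, forces $\tau$ to be "uniformly injective" in the sense that there is a finite window $W$ such that $\tau_W$ is injective on $\Sigma_W$, i.e. $x|_W$ is recoverable from $\tau(x)|_W$. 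This is where algebraic geometry does the real work: a morphism of finite-type $k$-schemes that is injective on $k$-points has constructible fibers, and the tower of restriction maps $\Sigma_{E} \to \Sigma_{W}$ must stabilize because each $\Sigma_E$ is a variety (Noetherian). I would isolate this as a lemma (it is presumably the technical heart proved earlier or in this section): \emph{an injective algebraic cellular automaton admits a "memory set" for its inverse on the image}, i.e. $\tau^{-1} \colon \tau(\Sigma) \to \Sigma$ is itself a (generalized, possibly only on the image) cellular automaton.

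Granting that, the argument for finite type is the following. Suppose $\Delta \subset \Sigma$ is of finite type, say $\Delta = \Sigma(A^G; D, P) \cap \Sigma$. Let $M$ be the memory set of $\tau$ and $N$ the memory set of the inverse map $\tau^{-1}$ on $\tau(\Sigma)$, and let $\Omega = DN \cup MN$ or some similar finite set absorbing all the data. I claim $\tau(\Delta)$ is the subshift of finite type on the window $\Omega$ cut out by the set $Q = \{ \tau(x)|_\Omega : x \in \Delta \}$, \emph{intersected with the finite-type condition defining $\tau(\Sigma)$ itself} (which exists because $\tau(\Sigma)$ is of finite type — this uses Theorem A's content applied to $\Delta = \Sigma$, or is established first). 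Concretely: if $y \in \tau(\Sigma)$ satisfies $(g \star y)|_\Omega \in Q$ for all $g$, then using the local inverse we recover a preimage $x \in \Sigma$ with $(g \star x)|_D \in P$ for all $g$ (the window $\Omega$ was chosen large enough that the local inverse reconstructs $x$ on $D$ from $y$ on $\Omega$), hence $x \in \Delta$ and $y = \tau(x) \in \tau(\Delta)$. Conversely if $\Delta$ is \emph{not} of finite type, then — since $\tau$ is a bijection $\Sigma \to \tau(\Sigma)$ that is bicontinuous and bi-equivariant with local rules in both directions — $\tau$ restricts to a conjugacy $\Delta \to \tau(\Delta)$; as "being of finite type" is invariant under conjugacy realized by a cellular automaton with a cellular inverse, $\tau(\Delta)$ cannot be of finite type either. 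Running the same argument with $\tau^{-1}$ in place of $\tau$ gives the reverse implication, so $\Delta$ is of finite type iff $\tau(\Delta)$ is.

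For the sofic case, I would use the standard characterization: a subshift is sofic iff it is the image of a subshift of finite type under a cellular automaton (by definition here), equivalently the image of \emph{any} subshift under a cellular automaton is sofic provided the source is sofic, when the ambient alphabets are finite — but here alphabets are varieties, not finite, so I must instead argue directly. If $\Delta$ is sofic, write $\Delta = \pi(\Theta)$ with $\Theta$ of finite type and $\pi$ a cellular automaton into $A^G$ (landing in $\Sigma$); then $\tau(\Delta) = (\tau \circ \pi)(\Theta)$ and $\tau \circ \pi$ is a cellular automaton, so $\tau(\Delta)$ is sofic by definition. Conversely, if $\tau(\Delta)$ is sofic, write $\tau(\Delta) = \rho(\Xi)$ with $\Xi$ of finite type; then $\Delta = \tau^{-1}(\rho(\Xi)) = (\tau^{-1} \circ \rho)(\Xi)$, and since $\tau^{-1}$ is a cellular automaton on $\tau(\Sigma) \supset \rho(\Xi)$ (by the lemma), the composite $\tau^{-1} \circ \rho$ is a cellular automaton, exhibiting $\Delta$ as sofic. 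The only subtlety is ensuring $\rho(\Xi) \subseteq \tau(\Sigma)$ so that $\tau^{-1} \circ \rho$ makes sense and is genuinely cellular; this holds because $\tau(\Delta) \subseteq \tau(\Sigma)$ and $\rho(\Xi) = \tau(\Delta)$.

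**Main obstacle.** The crux is establishing the "local inverse" lemma: that an injective algebraic cellular automaton over an uncountable algebraically closed field has a cellular inverse on its image, with a uniform finite memory window. Injectivity on $k$-points is an intrinsically set-theoretic/geometric hypothesis, and converting it into the uniform, window-bounded statement needs (i) the Noetherianity of each $\Sigma_E$ to force stabilization of the relevant chain of restriction maps, (ii) constructibility of images/fibers of morphisms of finite-type schemes (Chevalley), and (iii) the uncountability of $k$ to rule out pathologies where injectivity holds pointwise over $k$ but not "generically" in families indexed over the countable group $G$ — this is exactly the role the hypotheses "countable $G$" and "uncountable $k$" play, and it is where I expect to invoke a Baire-category or cardinality argument on $A^G = \prod_G X(k)$. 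Everything after that lemma is bookkeeping with windows.
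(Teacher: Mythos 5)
Your overall architecture --- a local left-inverse lemma obtained from the uncountability of $k$ plus constructibility, followed by window bookkeeping for the finite-type equivalence and a composition argument for soficity --- is the same as the paper's (Lemma~\ref{l:symbol-var-left-inverse}, Theorem~\ref{t:left-reverse-ca-alg}, Theorem~\ref{t:criterion-finite-type}, Theorems~\ref{t:sft-image} and~\ref{t:converse-sft-image}). But there is a genuine gap in your finite-type argument: you present $\tau(\Delta)$ as the set cut out by the window condition $Q$ \emph{intersected with the finite-type condition defining $\tau(\Sigma)$}, and you justify the existence of the latter by ``applying Theorem A with $\Delta=\Sigma$''. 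That is circular, and the premise fails: $\Sigma$ is only assumed to be a \emph{closed algebraic} subshift, not a subshift of finite type, so nothing gives you that $\tau(\Sigma)$ is of finite type. As structured, your proof that $\tau(\Delta)$ is of finite type does not close. The repair (and what the paper actually does in Theorem~\ref{t:criterion-finite-type}) is to show that no intersection with $\tau(\Sigma)$ is needed: taking a common memory set $M\ni 1_G$ for $\tau$ and for the local inverse $\eta$, with $\Delta=\Sigma(A^G;M,\Delta_M)$, one checks that for \emph{any} $y\in B^G$ all of whose $M^2$-patterns lie in $\tau(\Delta)_{M^2}$, the configuration $x=\sigma(y)$ obtained by applying $\eta_M$ everywhere satisfies $(g\star x)\vert_M\in\Delta_M$ for all $g\in G$, hence $x\in\Delta$, and a second local computation gives $\tau(x)=y$; thus $\tau(\Delta)=\Sigma(B^G;M^2,\tau(\Delta)_{M^2})$ outright.

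Two smaller points. Your converse direction rests on the assertion that being of finite type is invariant under a conjugacy realized by a cellular automaton with a cellular inverse; over variety alphabets and monoid universes, and for conjugacies between proper subshifts whose inverse is defined only on the image (or on an SFT extension of it), this is not an off-the-shelf fact --- it is exactly the window argument above run with the roles of $\tau$ and $\sigma$ exchanged, so it must be proved rather than cited. Also, in the local-inverse lemma the mechanism over an uncountable algebraically closed field is not Noetherian stabilization of restriction maps but the nonemptiness of the inverse limit of a countable projective system of nonempty constructible sets, applied to the sets of pairs $(u,v)$ with $\tau^+_{E_n}(u)=\tau^+_{E_n}(v)$ and $u(1_G)\neq v(1_G)$; the descending-chain argument you sketch is what works in the Artinian-module setting of the paper's second main theorem, not here. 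Your treatment of the sofic case matches the paper's.
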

\par 
Since the full shift is a subshift of finite type, we obtain as an immediate application of Theorem~\ref{t:sft-image-intro-main-alg} the following result. 

\begin{corollaryletter}
\label{c:sft-image-intro-corollary-1} 
Let $G$ be a countable monoid and let $X,Y$ be algebraic varieties over an uncountable algebraically closed field $k$. Let $A=X(k)$ and $B=Y(k)$. Suppose that $\tau \colon A^G \to B^G$ is an injective algebraic cellular automaton. Then  $\tau(A^G)$ is a subshift of finite type of $B^G$.    \qed
\end{corollaryletter}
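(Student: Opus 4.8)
The plan is to obtain Corollary~\ref{c:sft-image-intro-corollary-1} directly from Theorem~\ref{t:sft-image-intro-main-alg}, specialized to $\Sigma=\Delta=A^{G}=X(k)^{G}$; no new ideas are required beyond two elementary checks about the full shift, after which the theorem does all the work.

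First I would verify that the full shift $A^{G}$ is a \emph{closed algebraic subshift} of $A^{G}$ in the sense demanded by Theorem~\ref{t:sft-image-intro-main-alg}. It is $G$-invariant and closed in the prodiscrete topology, being the whole space. Moreover, for every finite $E\subset G$ the restriction $(A^{G})_{E}$ is all of $A^{E}=X(k)^{E}=(X^{E})(k)$, the set of $k$-points of the product variety $X^{E}$, hence tautologically a subvariety of $A^{E}$; so $A^{G}$ is an algebraic subshift and $\tau$ indeed qualifies as an algebraic cellular automaton on it. Second I would observe that $A^{G}$ is a subshift of finite type: taking the defining window $D=\{1_{G}\}$ and $P=A\subset A^{D}$, the description \eqref{e:sft} gives $\Sigma(A^{G};D,P)=A^{G}$.

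With these checks done, Theorem~\ref{t:sft-image-intro-main-alg} applies verbatim: $G$ is a countable monoid, $X$ and $Y$ are algebraic varieties over the uncountable algebraically closed field $k$, $\Sigma=A^{G}$ is a closed algebraic subshift, and $\tau\colon\Sigma\to B^{G}$ is an injective algebraic cellular automaton by hypothesis. Applying the conclusion of the theorem to the subshift $\Delta=\Sigma=A^{G}$, which we have just seen is of finite type, we conclude that $\tau(\Delta)=\tau(A^{G})$ is of finite type — in particular not strictly sofic. This is precisely the assertion of Corollary~\ref{c:sft-image-intro-corollary-1}.

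Thus the corollary presents no genuine obstacle of its own; the entire difficulty has been absorbed into Theorem~\ref{t:sft-image-intro-main-alg}. In proving that theorem the hypotheses that $k$ is uncountable and $G$ countable are what make the argument work: one first shows that an injective algebraic cellular automaton on a closed algebraic subshift has closed, algebraic image and that the induced bijection $\tau\colon\Sigma\to\tau(\Sigma)$ is reversible, its inverse again being an algebraic cellular automaton — the decisive point being a Baire-type/constructibility argument that prevents a restriction $\Sigma_{E}$ from being covered by countably many proper constructible subsets — and one then transfers the property of being of finite type, respectively sofic, across $\tau$ using that the former is invariant under conjugacy of subshifts and the latter is preserved under cellular automata. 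None of this needs to be reproved for the corollary.
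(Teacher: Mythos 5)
Your proposal is correct and matches the paper exactly: the paper derives Corollary~\ref{c:sft-image-intro-corollary-1} as an immediate consequence of Theorem~\ref{t:sft-image-intro-main-alg} by observing that the full shift is a closed algebraic subshift of finite type, which is precisely your argument. The two elementary checks you spell out (that $(A^G)_E=A^E$ is a subvariety and that $A^G=\Sigma(A^G;\{1_G\},A)$) are exactly what the paper leaves implicit behind its \qed.
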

\par 
Note that whenever $X=Y$ are finite and when $G$ is moreover a group,  Corollary~\ref{c:sft-image-intro-corollary-1} implies    \cite[Theorem~2.3]{doucha}. 
\par 
The second goal of the paper is to establish a similar result to  Theorem~\ref{t:sft-image-intro-main-alg} for injective admissible group cellular automata over countable monoid universes and admissible Artinian group structure alphabets (see Section~\ref{s:admissible-subshift}). We formulate below a particular application of the general Theorem~\ref{t:main-2-intro-admissible-general}.

\begin{theoremletter}
\label{t:main-corollary-admissible-intro}
Let $G$ be a countable monoid and let $A$, $B$ be Artinian modules over a ring. Let $\Sigma \subset A^G$ be a closed  subshift  submodule, e.g., $\Sigma= A^G$.  Suppose that $\tau \colon \Sigma \to B^G$ is an injective cellular automaton which is also a module homomorphism. Then for every subshift $\Delta \subset A^G$ contained in $\Sigma$, the subshift  $\tau(\Delta) \subset B^G$ is of finite type, resp.  sofic, if and only if so is $\Delta$.  
\end{theoremletter}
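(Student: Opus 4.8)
The plan is to reduce the statement to a \emph{reversibility lemma}: under the hypotheses, $\tau(\Sigma)$ is a closed subshift submodule of $B^G$ and the inverse bijection $\tau^{-1}\colon\tau(\Sigma)\to\Sigma$ is again a cellular automaton, necessarily a module homomorphism. Granting this, fix an exhaustion $E_1\subset E_2\subset\cdots$ of $G$ by finite sets with $1_G\in E_1$, let $N\ni 1_G$ be a memory set of $\tau^{-1}$ with local map $\nu\colon\tau(\Sigma)_N\to A$, and let $M\ni 1_G$ be a memory set of $\tau$ with local map $\mu$.

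First I would treat the implication ``$\Delta$ of finite type $\Rightarrow$ $\tau(\Delta)$ of finite type''. Write $\Delta=\Sigma(A^G;D,P)$ with $D$ finite; since $\Delta\subset\Sigma$ we may take $P$ so that $\Sigma(A^G;D,P)\subset\Sigma$. Choose a finite $W\supset N(M\cup D)$. I claim $\tau(\Delta)=\Sigma(B^G;W,\tau(\Delta)_W)$. The inclusion $\subset$ is immediate. For the reverse, given $y\in B^G$ with $(g\star y)\vert_W\in\tau(\Delta)_W$ for every $g\in G$, define $c\in A^G$ by $c(g)\coloneqq\nu\big((g\star y)\vert_N\big)$ (legitimate, since $(g\star y)\vert_N$ is a pattern of $\tau(\Sigma)$). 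For each $g$ pick $d_g\in\Delta$ with $\tau(d_g)\vert_W=(g\star y)\vert_W$; a direct computation with $\tau^{-1}\circ\tau=\mathrm{id}$ gives $(g\star c)(h)=d_g(h)$ for all $h\in M\cup D$, hence $(g\star c)\vert_D=d_g\vert_D\in P$ (as $d_g\in\Delta$). Thus $c\in\Sigma(A^G;D,P)=\Delta\subset\Sigma$, so $\tau(c)$ is defined and $\tau(c)(g)=\mu\big((g\star c)\vert_M\big)=\mu\big(d_g\vert_M\big)=\tau(d_g)(1_G)=y(g)$; therefore $y=\tau(c)\in\tau(\Delta)$. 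The decisive point here is that the single constraint $(g\star c)\vert_D\in P$ already forces $c\in\Sigma$, because $\Sigma(A^G;D,P)\subset\Sigma$ — this is exactly where one uses that $\Delta$ is of finite type \emph{as a subshift of $A^G$}, not merely relative to $\Sigma$.

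The remaining implications follow formally. For ``$\tau(\Delta)$ of finite type $\Rightarrow$ $\Delta$ of finite type'', apply the previous paragraph to the cellular automaton $\tau^{-1}\colon\tau(\Sigma)\to A^G$ and the subshift $\tau(\Delta)\subset\tau(\Sigma)$, which is of finite type; this yields that $\tau^{-1}(\tau(\Delta))=\Delta$ is of finite type. For the sofic equivalence: if $\Delta=\pi(Z)$ with $Z$ of finite type and $\pi$ a cellular automaton, then $\tau\circ\pi$ is a cellular automaton with image $\tau(\Delta)$, so $\tau(\Delta)$ is sofic; conversely $\tau^{-1}\circ\rho$ exhibits $\Delta=\tau^{-1}(\tau(\Delta))$ as sofic whenever $\tau(\Delta)=\rho(Z')$ is.

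What remains, and what I expect to be the main obstacle, is the reversibility lemma; this is where the Artinian hypothesis is essential. The heart of it is that $\tau^{-1}$ has finite memory, i.e.\ there is a finite $N\subset G$ with $\tau(c)\vert_N=0\Rightarrow c(1_G)=0$ for all $c\in\Sigma$ (then $\nu$ is obtained by linearity and $G$-equivariance). To see this, consider the decreasing chain of submodules $L_n\coloneqq\{\,d(1_G)\colon d\in\Sigma,\ \tau(d)\vert_{E_n}=0\,\}\subset A$; by the descending chain condition it stabilizes to some $L_\infty$. If $L_\infty\neq 0$, fix $0\neq a\in L_\infty$; for each fixed $m$ the sets $\{\,d\vert_{E_m}\colon d\in\Sigma,\ d(1_G)=a,\ \tau(d)\vert_{E_n}=0\,\}\subset A^{E_m}$ are nonempty cosets of submodules of the Artinian module $A^{E_m}$ and decrease with $n$, so (the underlying submodules stabilizing) they stabilize to a nonempty coset $T_m$; the $T_m$ are compatible under restriction and glue to a configuration $d^\ast\in\Sigma$ (using that $\Sigma$ is closed) with $d^\ast(1_G)=a$ and $\tau(d^\ast)=0$, contradicting injectivity. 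A parallel gluing argument, applied to approximants $c_m\in\Sigma$ with $\tau(c_m)\vert_{E_m}=y\vert_{E_m}$ and using $\nu$ to pin down their limit, shows $\tau(\Sigma)$ is closed; since $\tau^{-1}$ is then $G$-equivariant and uniformly continuous, it is a cellular automaton by \cite[Theorem~4.6]{csc-monoid-surj}. The delicate points are the passage from stabilization of submodule chains to stabilization of coset chains (valid in Artinian modules, since once the submodules are constant a decreasing chain of their cosets is constant) and checking that the glued configurations indeed lie in the closed submodule $\Sigma$.
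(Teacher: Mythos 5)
Your proof is correct, and its engine coincides with the paper's: the descending chain condition on submodules of $A^{E_m}$, together with closedness of $\Sigma$ and an inverse-limit/gluing argument, produces a finite $N\subset G$ such that $\tau(c)\vert_N$ determines $c(1_G)$; this is exactly Lemma~\ref{l:techincal-left-inverse} and Theorem~\ref{l:techno-left-inverse}, proved there with the kernels $U_n=\Ker(\tau^+_{E_n})$ and their stabilized projections $W_n$ where you use the cosets $T_{m,n}$ --- the two formulations are equivalent since $\Sigma$ is a submodule. The subsequent window computation identifying $\tau(\Delta)$ with $\Sigma(B^G;W,\tau(\Delta)_W)$ is the content of the criterion Theorem~\ref{t:criterion-finite-type} (with $M^2$ playing the role of your $W\supset N(M\cup D)$). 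The architecture differs in two respects. First, the paper never inverts $\tau$ on its image: it extends the left-inverse local map $\eta_M$ to a cellular automaton $\sigma$ on the enveloping subshift of finite type $\Sigma(B^G;M^2,\Gamma_{M^2})\supset\tau(\Sigma)$ and only uses $\sigma\circ\tau=\Id_\Sigma$, proving the converse implication by a separate symmetric argument (Theorem~\ref{t:main-2-intro-admissible-converse}); you instead show that $\tau(\Sigma)$ is closed and that $\tau^{-1}$ is a genuine cellular automaton, which costs a little extra (the second gluing argument) but lets you obtain the converse, and the sofic equivalence, purely formally by applying the forward direction to $\tau^{-1}$. Second, the paper reduces the two-alphabet situation to a single alphabet $S=A\oplus B$ via $\tau_S(x,y)=(\tau(x),y)$ in order to invoke the general Theorem~\ref{t:main-2-intro-admissible-general} for admissible Artinian group structures; your direct two-alphabet DCC argument makes this reduction unnecessary, since nothing in the chain-condition argument requires the source and target alphabets to coincide. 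Both routes are sound: yours is more self-contained for the module case, while the paper's factors through machinery that also covers algebraic groups and compact Lie groups.
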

\par 
The present paper belongs to the rich literature on the study of injective morphisms of symbolic varieties and cellular automata. The topic admits a long history and substantial developments which date back to the works of Moore
\cite{moore} and Myhill \cite{myhill} on the well-known Garden of Eden theorem. 
Over group universes, various  injective endomorphisms of symbolic varieties are  bijective as motivated by the  Gottschalk surjunctivity conjecture \cite{gottschalk} and the seminal paper of Gromov \cite{gromov-esav} (see also \cite{csc-sofic-linear}, \cite{cscp-alg-ca}, \cite{cscp-alg-goe},  \cite{phung-2020} \cite{phung-post-surjective}, \cite{phung-LEF}). Notable applications of  surjunctivity property include the well-known Kaplansky's stable finiteness conjecture \cite{kap} on group rings (see \cite{ara}, \cite{elek},  \cite{phung-2020}, \cite{cscp-jpaa},  \cite{phung-geometric}, \cite{phung-weakly}) and a dynamical characterization of amenable groups \cite{bartholdi-kielak}.  
\par 
Over finite alphabets, bijective cellular automata over group universes are  automorphisms (see e.g.  \cite[Theorem~1.3]{cscp-alg-ca} for more general alphabets).  
However, when the universe is merely a monoid, we know many examples of injective non-surjective cellular automata (see \cite{csc-monoid-surj} whenever the monoid universe contains a bicyclic submonoid) which provide us with interesting strict embeddings of subshifts. Hence,  we find that it is natural to investigate the relations between subshifts and their images under such embeddings,  which constitutes the main motivation of the paper. 
\par 
The paper is organized as follows. Section~\ref{s:preliminary} provides some basic lemmata and results on the induced local maps and subshifts of finite type. Section~\ref{s:criterion-sft} presents a useful criterion (Theorem~\ref{t:criterion-finite-type}) for a subshift to be of finite type that we will apply frequently in the proof of the main results. 
In  Section~\ref{s:left-inverse-algebraic}, we establish the left reversibility of injective morphisms of symbolic varieties. 
Section~\ref{s:main-morphism-symbolic-var} contains the proof of Theorem~\ref{t:sft-image-intro-main-alg}. 
Basic definitions and properties of admissible Artinian group structures and admissible group cellular automata are collected in Section~\ref{s:admissible-subshift}. Then we formulate and prove a left reversibility result (Theorem~\ref{l:techno-left-inverse}) for injective admissible group cellular automata in Section~\ref{s:techno-left-inverse}. We establish in  Section~\ref{s:main-2-intro-admissible-general} the second main result of the paper  Theorem~\ref{t:main-2-intro-admissible-general} from which we deduce a proof of   Theorem~\ref{t:main-corollary-admissible-intro} given in Section~\ref{s:proof-thm-b}. Finally, in Section~\ref{s:app-sym-group-var} we give another application of Theorem~\ref{t:main-2-intro-admissible-general} to obtain an improvement of  Theorem~\ref{t:sft-image-intro-main-alg} in the case of injective morphisms of symbolic group varieties (Theorem~\ref{t:sym-grp-var-inj}). 
 \section{Preliminaries}
 \label{s:preliminary}

The set of non-negative integers is denoted 
by $\N$. For subsets $E, F$ of a monoid $G$, we denote their product  by 
\[
EF \coloneqq \{ xy\colon x \in E, y \in F \} \subset G.
\]
 \subsection{Subshifts of finite type} 
We have the following elementary observation which allows us to perform the base change of defining windows for subshifts of finite type. 
 \begin{lemma}
\label{l:window-change-sft}
Let $G$ be a monoid and let $A$ be a set. Let $\Sigma= \Sigma(A^G; D,P)$ for some finite subset $D \subset G$ and some subset $P \subset A^D$. Then for every subset $E \subset G$ such that $D \subset E$, we have $\Sigma= \Sigma(A^G; E, \Sigma_E)$. 
\end{lemma}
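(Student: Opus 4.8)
The plan is to prove the two inclusions $\Sigma \subseteq \Sigma(A^G;E,\Sigma_E)$ and $\Sigma(A^G;E,\Sigma_E) \subseteq \Sigma$ separately, working directly from the defining formula \eqref{e:sft} and using only that a subshift of finite type is $G$-invariant. The argument is elementary; the two facts about $G$ that make it run are the associativity identity $g\star(h\star x)=(gh)\star x$, valid in any monoid by the definition $(g\star x)(h)=x(hg)$, and the existence of an identity element $1_G\in G$ with $1_G\star x=x$.

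For the inclusion $\Sigma \subseteq \Sigma(A^G;E,\Sigma_E)$, I would first record that $\Sigma=\Sigma(A^G;D,P)$ is $G$-invariant. This is already part of its being a subshift, but it also follows at once from \eqref{e:sft}: if $x\in\Sigma$ and $h\in G$, then for every $g\in G$ one has $\bigl(g\star(h\star x)\bigr)\vert_D=\bigl((gh)\star x\bigr)\vert_D\in P$, whence $h\star x\in\Sigma$. Consequently, for $x\in\Sigma$ and any $g\in G$ we have $g\star x\in\Sigma$, so $(g\star x)\vert_E\in\Sigma_E$ by the very definition of the restriction $\Sigma_E$. Since $g$ was arbitrary, this gives $x\in\Sigma(A^G;E,\Sigma_E)$.

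For the reverse inclusion, let $x\in\Sigma(A^G;E,\Sigma_E)$ and fix $g\in G$. By hypothesis $(g\star x)\vert_E\in\Sigma_E$, so there is some $y\in\Sigma$ with $y\vert_E=(g\star x)\vert_E$; restricting further to $D$, which is legitimate because $D\subseteq E$, gives $y\vert_D=(g\star x)\vert_D$. Now, since $y\in\Sigma=\Sigma(A^G;D,P)$, applying \eqref{e:sft} to $y$ with the identity element of $G$ yields $y\vert_D=(1_G\star y)\vert_D\in P$. Therefore $(g\star x)\vert_D\in P$, and as $g$ ranges over $G$ we conclude $x\in\Sigma(A^G;D,P)=\Sigma$. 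Combining the two inclusions gives the claimed equality.

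There is no serious obstacle here: the only points that need a moment's care are that a monoid universe still supplies the identity element used to extract $y\vert_D\in P$ from membership in $\Sigma(A^G;D,P)$, that passing from the window $E$ to the sub-window $D$ is harmless precisely because $D\subseteq E$, and that the formula \eqref{e:sft} makes sense for an arbitrary subset $E$ so that $\Sigma(A^G;E,\Sigma_E)$ is well defined even when one does not insist on $E$ being finite.
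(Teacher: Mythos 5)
Your proof is correct and complete; both inclusions are argued properly, including the use of the identity element to extract $y\vert_D\in P$ and the observation that $G$-invariance follows from $g\star(h\star x)=(gh)\star x$. The paper itself does not spell out an argument but defers to cited references for the group case, and your double-inclusion argument is exactly the standard proof being alluded to there, carried out correctly in the monoid setting.
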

 
\begin{proof}
See \cite[Lemma~2.1]{phung-israel}, \cite[Lemma~5.1]{cscp-invariant-ca-alg} for the case when $G$ is a group. The proof is similar when $G$ is a monoid. 
\end{proof} 
\par 
The following  remark will be useful for the proof of our main results introduced in the Introduction. 

\begin{lemma}
\label{l:restriction-subshift-equality}
Let $G$ be a monoid and let $A$ be a set. Suppose that $\Delta \subset A^G$ is a subshift. Let $F  \subset G$ and $\Lambda= \Sigma(A^G; F, \Delta_F)$. Then for every subset $E \subset F$, we have $\Lambda_E= \Delta_E$.
\end{lemma}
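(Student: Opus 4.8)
The plan is to prove the equality by the two obvious inclusions, using nothing beyond the $G$-invariance of $\Delta$ and the existence of the identity $1_G$ of the monoid $G$; no finiteness of $F$ is needed, since the formula \eqref{e:sft} defining $\Sigma(A^G;F,\Delta_F)$ makes sense verbatim for an arbitrary window $F \subset G$.

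First I would check that $\Delta \subset \Lambda$. Let $x \in \Delta$. Since $\Delta$ is a subshift, $g \star x \in \Delta$ for every $g \in G$, hence $(g \star x)\vert_F \in \Delta_F$ for every $g \in G$; by \eqref{e:sft} this is precisely the statement that $x \in \Lambda = \Sigma(A^G; F, \Delta_F)$. Restricting to the subset $E \subset F$ then gives $\Delta_E \subset \Lambda_E$.

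Next I would prove the reverse inclusion $\Lambda_E \subset \Delta_E$. Take $y \in \Lambda_E$ and write $y = x\vert_E$ for some $x \in \Lambda$. Applying the defining condition \eqref{e:sft} with $g = 1_G$ and using $1_G \star x = x$ (because $(1_G \star x)(h) = x(h 1_G) = x(h)$ for all $h \in G$), we obtain $x\vert_F \in \Delta_F$. Thus there exists $z \in \Delta$ with $z\vert_F = x\vert_F$, and since $E \subset F$ this yields $z\vert_E = x\vert_E = y$, so $y \in \Delta_E$.

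Combining the two inclusions gives $\Lambda_E = \Delta_E$, as desired. The argument is purely formal, so I do not anticipate any genuine obstacle; the only mild subtlety worth flagging is that it is the identity element of $G$ that licenses passing from $x \in \Lambda$ to $x\vert_F \in \Delta_F$, and that the hypothesis $E \subset F$ (rather than merely $E \subset G$) is exactly what makes the final restriction comparison $z\vert_E = x\vert_E$ legitimate.
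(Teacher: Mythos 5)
Your proof is correct and follows essentially the same route as the paper's: both establish $\Delta \subset \Lambda$ directly from $G$-invariance to get $\Delta_E \subset \Lambda_E$, and both obtain the reverse inclusion from $\Lambda_F \subset \Delta_F$ (which the paper leaves implicit and you justify explicitly via $g = 1_G$) together with $E \subset F$. Your remarks on the role of $1_G$ and the dispensability of finiteness of $F$ are accurate but do not change the argument.
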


\begin{proof}
Observe first that $\Delta \subset \Lambda$ since every configuration $x \in \Delta$ satisfies trivially $(g\star x)\vert_F \subset \Delta_F$ for all $g \in G$ as $g \star x \in \Delta$. It follows that $\Delta_E \subset \Lambda_E$. 
On the other hand, since $ E \subset F$ by hypothesis, we find that
\begin{align*}
\Lambda_E  & = (\Lambda_{F})_E & (\text{since } E \subset F)  \\ & \subset (\Delta_{F})_E & (\text{as }\Lambda= \Sigma(A^G; F, \Delta_{F}))  
\\ & = \Delta_E
& (\text{since } E \subset F) 
\\ & \subset \Lambda_E 
& (\text{since } \Delta \subset \Lambda).   
\end{align*}
\par 
Consequently, we have $\Lambda_E= \Delta_E$ and the proof is complete. 
\end{proof}

\subsection{Induced local maps} 
\label{s:induced-map}
For the notation, let $G$ be a monoid and let $A, B$ be sets. Let $\Sigma \subset A^G$ be a subshift and let 
$\tau \colon \Sigma \to B^G$ be a cellular automaton. Fix a  memory set $M$ and the corresponding  local defining map $\mu \colon \Sigma_M \to B$. For every finite subset $E \subset G$, we denote by $\tau_E^+ \colon \Sigma_{ME} \to B^E$ the induced local map of $\tau$ by setting 
$\tau_E^+(x)(g) = \mu ((g \star y)\vert_M)$ for every $x \in \Sigma_{ME}$, $g \in E$, and  $y \in \Sigma$ such that $y \vert_{ME}=x$.  Equivalently, we can define  
\[ 
\tau_E^+(x\vert_{ME})= \tau(x)\vert_E, \quad \text{ for all }x \in \Sigma. 
\] 
\par 
We have the following auxiliary lemma for the induced maps of algebraic cellular automata:

  \begin{lemma}
  \label{l:induced-alg-map}
Let $G$ be a monoid and let $X, Y$ be algebraic varieties over an algebraically closed field $k$. Let $A=X(k)$, $B= Y(k)$, and let $\Sigma \subset A^G$ be an algebraic subshift. Fix a memory set $M \subset G$ of an algebraic cellular  automaton $\tau\colon \Sigma \to B^G$.  
Then for every finite subset $E \subset G$, the induced map  
$\tau_{E}^{+} \colon \Sigma_{M E} \to B^E$  
is a morphism of $k$-algebraic varieties.  
\end{lemma}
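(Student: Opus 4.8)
The plan is to realize $\tau_E^+$ as a tuple of compositions of morphisms of algebraic varieties. Since $M$ and $E$ are finite, the product set $ME \subset G$ is finite, so by the definition of an algebraic subshift, $\Sigma_{ME}$ is a subvariety of $A^{ME} = X(k)^{ME}$ and $\Sigma_M$ is a subvariety of $A^M = X(k)^M$; likewise $B^E = Y(k)^E$ is the set of $k$-points of the finite product variety $Y^E = \prod_{h \in E} Y$. Thus every space appearing below carries a natural structure of $k$-algebraic variety.

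First, for each $g \in E$, I would introduce the shifted coordinate projection $\pi_g \colon A^{ME} \to A^M$ given by $\pi_g(x)(m) \coloneqq x(mg)$ for $x \in A^{ME}$ and $m \in M$; this is well defined because $mg \in ME$ whenever $m \in M$ and $g \in E$. Viewed on varieties, $\pi_g$ is the morphism $X^{ME} \to X^M$ whose component indexed by $m \in M$ is the coordinate projection $X^{ME} \to X$ onto the factor labelled by $mg$ (the assignment $m \mapsto mg$ need not be injective, but this does not affect the fact that $\pi_g$ is a morphism). Hence $\pi_g$ is a morphism of $k$-algebraic varieties.

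Next I would check that $\pi_g$ sends $\Sigma_{ME}$ into $\Sigma_M$. Given $x \in \Sigma_{ME}$, choose $y \in \Sigma$ with $y\vert_{ME} = x$; then for $m \in M$ we have $\pi_g(x)(m) = x(mg) = y(mg) = (g \star y)(m)$, so $\pi_g(x) = (g \star y)\vert_M$, and since $\Sigma$ is $G$-invariant, $g \star y \in \Sigma$, whence $(g\star y)\vert_M \in \Sigma_M$. Because $\Sigma_{ME}$ and $\Sigma_M$ are subvarieties of $A^{ME}$ and $A^M$ and the set-theoretic image of $\Sigma_{ME}$ under $\pi_g$ lies inside $\Sigma_M$, the morphism $\pi_g$ corestricts to a morphism of algebraic varieties $\pi_g \colon \Sigma_{ME} \to \Sigma_M$.

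Finally, by the definition of the induced local map recalled in Section~\ref{s:induced-map}, we have $\tau_E^+(x)(g) = \mu(\pi_g(x))$ for all $x \in \Sigma_{ME}$ and $g \in E$, where $\mu \colon \Sigma_M \to B$ is the local defining map of $\tau$, which is a morphism of algebraic varieties since $\tau$ is an algebraic cellular automaton. Consequently $\tau_E^+ = (\mu \circ \pi_g)_{g \in E} \colon \Sigma_{ME} \to B^E$ is the tuple of the morphisms $\mu \circ \pi_g \colon \Sigma_{ME} \to B = Y(k)$, and therefore $\tau_E^+$ is a morphism of $k$-algebraic varieties into $Y^E$. I expect no serious difficulty here; the only point deserving care is the legitimacy of corestricting $\pi_g$ to $\Sigma_{ME} \to \Sigma_M$, which holds because we work with reduced $k$-schemes of finite type and the subvarieties in question contain the relevant images.
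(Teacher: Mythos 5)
Your proof is correct and follows essentially the same route as the paper: both decompose $\tau_E^+$ into its components indexed by $g \in E$, each being the local defining map $\mu$ post-composed with a coordinate projection reading off the entries at $mg$, $m \in M$, and then invoke the universal property of products. The only cosmetic difference is that you factor through $\Sigma_M$ via the shifted projection $m \mapsto mg$ while the paper factors through $\Sigma_{Mg}$; your version is in fact slightly more careful about the non-injectivity of $m \mapsto mg$ and about why the projection corestricts to $\Sigma_M$.
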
 

\begin{proof}
The lemma follows directly from the universal property of fibered products. The morphism $\tau_E^+$ is determined by the component morphisms $T_g\colon \Sigma_{ME} \to A^{\{g\}}$, $g \in E$, given by  $T_g(x)  = \mu ((g\star x)\vert_M)$ for all $x \in \Sigma_{ME}$. It suffices to note that $T_g$ is the composition of the morphism $\Sigma_{Mg} \to B^{\{g\}}$ induced by the morphism $\mu$ and the canonical projection $\Sigma_{ME} \to \Sigma_{Mg}$ which is clearly algebraic.  
\end{proof}

\section{A criterion for  subshifts to be of finite type} 
\label{s:criterion-sft}
In this section, we formulate a general technical criterion (Theorem~\ref{t:criterion-finite-type}) for subshifts to be of finite type that will be useful for the proof of the main results of the paper. 
\par 
Let us first introduce the context and notation. Given a monoid $G$ and two sets $A$, $B$.  Let $\Sigma$ and $\Gamma$ be respectively  subshifts of $A^G$ and $B^G$. Suppose that $\tau \colon \Sigma \to B^G$ and $\sigma \colon \Gamma \to A^G$ are cellular automata with a common memory set $M \subset G$ such that $1_G \in M$. 

\begin{theorem}
\label{t:criterion-finite-type}
With the above notation, suppose that 
$\Gamma= \tau(\Sigma)$ and $\sigma \circ \tau$ is the identity map on $\Sigma$. Assume in addition that   $\Sigma= \Sigma(A^G; M, \Sigma_M)$. Then one has $\Gamma = \Sigma(B^G; M^2, \Gamma_{M^2})$. Thus, $\Gamma \subset B^G$ is a subshift of finite type. 
\end{theorem}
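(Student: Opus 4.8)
The plan is to show the two inclusions $\Gamma \subset \Sigma(B^G; M^2, \Gamma_{M^2})$ and $\Sigma(B^G; M^2, \Gamma_{M^2}) \subset \Gamma$. The first inclusion is automatic: for any $y \in \Gamma$ and any $g \in G$ we have $g \star y \in \Gamma$ since $\Gamma$ is a subshift, hence $(g\star y)\vert_{M^2} \in \Gamma_{M^2}$, which is exactly the defining condition for membership in $\Sigma(B^G; M^2, \Gamma_{M^2})$. So the whole content is the reverse inclusion: given $y \in B^G$ with $(g\star y)\vert_{M^2} \in \Gamma_{M^2}$ for all $g \in G$, I must produce $x \in \Sigma$ with $\tau(x) = y$.

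The key idea is to use $\sigma$ to build the candidate preimage. Set $x \coloneqq \sigma(y) \in A^G$; this makes sense since $\sigma$ is defined on all of $\Gamma$'s ambient full shift only through its local rule, but to apply $\sigma$ I should work locally. Concretely, I would first check $x = \sigma(y)$ lands in $\Sigma$. Using the assumption $\Sigma = \Sigma(A^G; M, \Sigma_M)$, it suffices to verify $(g \star x)\vert_M \in \Sigma_M$ for every $g \in G$. Now $(g\star x)\vert_M = (g \star \sigma(y))\vert_M$ depends, via the memory set $M$ of $\sigma$, only on $(g \star y)\vert_{M^2} = (g\star y)\vert_{MM}$; more precisely $(g\star \sigma(y))\vert_M = \sigma_M^+\big((g\star y)\vert_{MM}\big)$ in the induced-local-map notation of Section~\ref{s:induced-map} (here using $1_G \in M$ so that $M \subset M^2$ and the windows make sense). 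By hypothesis $(g\star y)\vert_{M^2} \in \Gamma_{M^2}$, so there is $z \in \Gamma = \tau(\Sigma)$ with $z\vert_{M^2} = (g\star y)\vert_{M^2}$; write $z = \tau(w)$ with $w \in \Sigma$. Then $\sigma_M^+((g\star y)\vert_{MM}) = \sigma_M^+(z\vert_{MM}) = (\sigma(z))\vert_M = (\sigma\circ\tau(w))\vert_M = w\vert_M \in \Sigma_M$. Hence $(g\star x)\vert_M \in \Sigma_M$ for all $g$, so $x \in \Sigma$.

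It remains to show $\tau(x) = y$. Fix $g \in G$; I want $\tau(x)(g) = y(g)$. Again $\tau(x)(g) = \mu_\tau((g\star x)\vert_M)$ depends only on $(g\star x)\vert_M$, which as above equals $w\vert_M$ for a suitable $w \in \Sigma$ with $\tau(w)\vert_{M^2} = (g\star y)\vert_{M^2}$; since $1_G \in M$ we get in particular $\tau(w)(1_G \cdot \text{stuff})$ — more carefully, because $1_G \in M$, evaluating the induced local map $\tau_M^+$ at $(g\star x)\vert_{M^2} = (g\star x)\vert_{M\cdot M}$ gives $\tau(x)\vert_M \ni \tau(x)(1_G)$-type data; one then reads off that $\tau(x)(g) = \tau_{\{1_G\}}^+((g\star x)\vert_M) = \tau_{\{1_G\}}^+(w\vert_M) = \tau(w)(1_G) = (g\star y)(1_G) = y(g)$, using $\tau(w)\vert_{M^2} = (g\star y)\vert_{M^2}$ and $1_G \in M \subset M^2$. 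Therefore $\tau(x) = y$, so $y \in \tau(\Sigma) = \Gamma$, completing the reverse inclusion.

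The main obstacle, and the step needing the most care, is the bookkeeping of windows and the repeated passage ``$(g\star y)\vert_{M^2} \in \Gamma_{M^2}$, therefore it extends to some $z \in \Gamma$, therefore it is $\tau(w)\vert_{M^2}$ for some $w \in \Sigma$'': one must make sure that the local rules of $\sigma$ and $\tau$, each with memory set $M$, only ever see the data on $M^2$-translates, which is why $M^2$ (rather than $M$) is the correct defining window, and one must use $1_G \in M$ both to guarantee $M \subset M^2$ and to recover pointwise values of $\tau(x)$ and of the configurations from their restrictions to $M$. I expect no genuine difficulty beyond this, since the composite $\sigma\circ\tau = \mathrm{id}_\Sigma$ does all the real work; the argument is essentially that the finite-type condition on $\Gamma$ ``lifts'' through the local left inverse $\sigma$ from the finite-type condition on $\Sigma$.
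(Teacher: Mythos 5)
Your proposal is correct and follows essentially the same route as the paper: extend $\sigma$ to the candidate finite-type subshift $\Sigma(B^G; M^2,\Gamma_{M^2})$ via its local rule (justified by $\Lambda_M=\Gamma_M$, i.e.\ by the hypothesis $(g\star y)\vert_{M^2}\in\Gamma_{M^2}$), use $\Sigma=\Sigma(A^G;M,\Sigma_M)$ together with $\sigma\circ\tau=\mathrm{id}_\Sigma$ to show the extension lands in $\Sigma$, and then verify $\tau(\sigma(y))=y$ pointwise using $1_G\in M\subset M^2$. The only blemish is the garbled sentence about ``$\tau(x)(1_G\cdot\text{stuff})$'', but the computation you settle on afterwards is exactly the paper's.
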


\begin{proof}
Let us denote $\Lambda= \Sigma(B^G; M^2, \Gamma_{M^2})$. Then $\Lambda$ is a subshift of finite type of $B^G$ and it is clear that $\Gamma \subset \Lambda$. 
\par 
Let $\mu_M \colon \Sigma_M \to B$ and $\eta_M \colon \Gamma_M \to A$ be respectively the local defining maps of $\tau$ and $\sigma$ associated with the memory set $M$. 
Note that since $1_G \in M$, we have $M \subset M^2$. It follows that $\Lambda_M= \Gamma_M$ by Lemma~\ref{l:restriction-subshift-equality}. 
\par 
Consequently,  we obtain a well-defined cellular automaton $\pi \colon \Lambda \to A^G$  which admits $\eta_M \colon \Lambda_M \to A$ as the local defining map associated with the finite memory set $M \subset G$. Observe also that $\pi\vert_\Gamma = \sigma$ since the cellular automata $\pi$ and $\sigma$ have the same local defining map and $\Gamma \subset \Lambda$. 
\par 
We claim that  $\pi (\Lambda) \subset \Sigma$. 
Indeed, let $y \in \Lambda$ and let $g \in G$. Since we have $\Lambda = \Sigma(B^G; M^2, \Gamma_{M^2})$ by definition, 
$(g \star y)\vert_{M^2} \in \Lambda_{M^2} \subset \Gamma_{M^2}$. Therefore,  $(g \star y)\vert_{M^2} = x\vert_{M^2}$ for some  configuration $x \in \Gamma$.
\par 
Since $\Gamma = \tau(\Sigma)$ by hypotheses, we can choose $z \in \Sigma$ such that $\tau(z)=x$. We note that 
\[
\pi(\tau(z))=\sigma(\tau(z))=z
\]
since $z \in \Sigma$ and since $\pi \circ \tau= \sigma \circ \tau$ acts as the identity map on $\Sigma$. We can thus compute:  
\begin{align}
\label{e:g-star-pi-y-1}
   (g \star  \pi(y)) \vert_{M} & = \pi(g \star y)\vert_{M}  
    = \pi_M^+((g\star y)\vert_{M^2}) \nonumber \\ 
   & = \pi_M^+(x\vert_{M^2})
    = \pi(x)\vert_M \nonumber \\ & = \pi(\tau(z))\vert_M= z\vert_M. \numberthis
\end{align}
\par 
Consequently, $(g \star \pi(y)) \vert_{M} = z\vert_M \in \Sigma_M$ for all $g \in G$. Thus, it  follows from the hypothesis 
 $\Sigma= \Sigma(A^G; M, \Sigma_M)$ 
that $\pi(y) \in \Sigma$ for all $y \in \Lambda$. Therefore, $\pi(\Lambda) \subset \Sigma$ and the claim is proved. 
\par 
Now let $y \in \Lambda$ and let $g \in G$. Since $(g \star y)\vert_{M^2} \in Y_{M^2} \subset \Gamma_{M^2}$, we can find as above $x \in \Gamma$ and $z\in \Sigma$ such that $\tau(z)=x$ and  $(g \star y)\vert_{M^2} = x\vert_{M^2}$. In particular, $(g \star y)(1_G)=x(1_G)$ as $1_G \in M^2$. 
\par 
We have seen in \eqref{e:g-star-pi-y-1} that $(g \star  \pi(y)) \vert_{M} = z\vert_M$. As $\pi(y) \in \Sigma$, it makes sense to  write and consider $\tau(\pi(y))$ that  we can compute as follows: 

\begin{align*}
\label{e:main-ad-grp-proof-2}
    \tau(\pi(y))(g) 
    & = \mu_M ((g \star \pi(y))\vert_M)  \\ 
    &  = \mu_M(z\vert_M)  
     = \tau(z)(1_G)  \\
      & = x(1_G)   
       = (g\star y)(1_G)\\
       & = y(g).
\end{align*}
\par 
Hence, we have $y= \tau(\sigma(y)) $ for all $y \in \Lambda$. It follows that $y \in \tau(\Sigma) = \Gamma$ and therefore $\Lambda \subset \Gamma$. On the other hand, $\Gamma \subset \Sigma(A^G; M^2, \Gamma_{M^2})= \Lambda$ so  we can conclude that 
\[
\Gamma = \Lambda= \Sigma(B^G; M^2, X_{M^2}). 
\]
\par 
In particular, $\Gamma$ 
is a subshift of finite type of $B^G$. The proof is thus complete. 
\end{proof}

\section{Left inverses of injective morphisms of symbolic varieties} 
\label{s:left-inverse-algebraic}

In this section, we shall establish the following left reversibility result for injective algebraic cellular automata. 

\begin{theorem}
\label{t:left-reverse-ca-alg} 
Let $G$ be a countable monoid.   
Let $X, Y$ be algebraic varieties  over an uncountable algebraically closed field $k$ and  let $A=X(k)$, $B=Y(k)$.  Let $\Sigma\subset A^G$ be a closed algebraic subshift and let $\Gamma= \tau(\Sigma)$.  Suppose that  
 $\tau \colon \Sigma \to B^G$ is an injective algebraic cellular automaton. 
Then there exists a finite subset  $N\subset G$ such that 
for every finite subset $E \subset G$ containing $N$, 
there exists a map   
$\eta_E \colon  \Gamma_E \to A$ with 
$\eta_E(\tau(x)\vert_E)=x(1_G)$  for all $x\in \Sigma$. 
\end{theorem}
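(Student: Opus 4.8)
The plan is to establish the left-invertibility "coordinate by coordinate at $1_G$" using a compactness / Baire-category argument over the uncountable field $k$, combined with the injectivity of $\tau$. Since $\tau$ is an algebraic cellular automaton with memory set $M$, for every finite $E \subset G$ the induced map $\tau_E^+ \colon \Sigma_{ME} \to B^E$ is a morphism of $k$-varieties by Lemma~\ref{l:induced-alg-map}; in particular $\Gamma_E = \tau_E^+(\Sigma_{ME})$ is a constructible subset of $B^E$. The task is to produce a single finite window $N$ such that the "germ at $1_G$" of a configuration in $\Sigma$ is determined by the values of its $\tau$-image on $N$: concretely, such that whenever $x, x' \in \Sigma$ satisfy $\tau(x)\vert_N = \tau(x')\vert_N$, then $x(1_G) = x'(1_G)$. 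Once this is shown, the map $\eta_N$ is well-defined on $\Gamma_N$ by $\eta_N(\tau(x)\vert_N) = x(1_G)$, and for any finite $E \supseteq N$ one simply precomposes with the restriction $\Gamma_E \to \Gamma_N$ to get $\eta_E$.

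First I would set up the "separation" sets. Enumerate $G = \{g_1 = 1_G, g_2, \dots\}$ and let $E_n = \{g_1, \dots, g_n\}$. Consider the closed (in the prodiscrete topology) subset
\[
Z_n \;=\; \bigl\{ (x,x') \in \Sigma \times \Sigma \;:\; \tau(x)\vert_{E_n} = \tau(x')\vert_{E_n},\ x(1_G) \neq x'(1_G) \bigr\}
\]
— or rather its closure, to keep things closed. By injectivity of $\tau$, the intersection $\bigcap_n Z_n$ is empty: a pair $(x,x')$ in every $Z_n$ would have $\tau(x) = \tau(x')$ yet $x(1_G)\neq x'(1_G)$. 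Because $\Sigma$ is closed in $A^G$ and the relevant pieces of $A$, $B$ are cut out by algebraic conditions, I would argue that a suitable "profinite / algebraic" compactness applies — this is where the uncountability of $k$ and countability of $G$ enter, exactly as in the surjunctivity literature (Gromov, Cornulier–Sauer, and the author's earlier work): an uncountable algebraically closed field is not a countable union of proper subvarieties, so a decreasing countable chain of nonempty constructible sets with empty intersection must have some member empty. Applying this to the images of the $Z_n$ under an appropriate projection yields $Z_N = \emptyset$ for some $N$, equivalently the desired separation property at the window $E_N$.

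The main obstacle I anticipate is making the compactness step rigorous in the non-finite-alphabet setting: $\Sigma \times \Sigma$ need not be compact, and the sets $Z_n$ live in an infinite product, so one cannot directly invoke topological compactness. The resolution — and the technical heart — is to push everything down to finite windows: fix a finite $F \subset G$ large enough (depending on $M$) and observe that the condition "there exist $x,x' \in \Sigma$ with $\tau(x)\vert_{E_n} = \tau(x')\vert_{E_n}$ and $x(1_G)\neq x'(1_G)$" is equivalent, by the standard patching lemma for closed subshifts (using that $\Sigma = \Sigma(A^G; M, \Sigma_M)$-type descriptions pass to restrictions, cf. Lemmas~\ref{l:window-change-sft} and~\ref{l:restriction-subshift-equality}), to a condition on the finite-dimensional constructible sets $\Sigma_{ME_n} \times \Sigma_{ME_n}$. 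These form a projective system of nonempty constructible sets over $k$; the uncountability hypothesis guarantees that a countable projective system of nonempty constructible $k$-sets with surjective-enough transition maps has nonempty inverse limit (this is the precise form of "algebraic compactness" used throughout the author's program). Hence if every $Z_n$ were nonempty we would extract an actual pair $(x,x') \in \Sigma\times\Sigma$ violating injectivity — contradiction. So some $Z_N$ is empty, $N := E_N$ works, and $\eta_E$ for $E \supseteq N$ is obtained by restriction as described. I would end by remarking that $\eta_E$ need not (yet) be a morphism of varieties — only a well-defined map — which is all the statement claims; upgrading it to an algebraic local defining map is the business of the next section.
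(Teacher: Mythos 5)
Your proposal is correct and follows essentially the same route as the paper: reduce the separation property at $1_G$ to the nonemptiness of a projective system of constructible sets $V_n \subset \Sigma_{ME_n}\times_k\Sigma_{ME_n}$ (pairs with equal $\tau_{E_n}^+$-images but differing values at $1_G$), invoke the inverse-limit lemma for countable projective systems of nonempty constructible sets over an uncountable algebraically closed field together with the closedness of $\Sigma$ (so that $\varprojlim_n \Sigma_{ME_n}=\Sigma$), and derive a contradiction with injectivity; the map $\eta_E$ is then defined exactly as you describe. The only cosmetic difference is that the paper works with the finite-window sets from the outset rather than first introducing the infinite-dimensional sets $Z_n$ and then descending, but the substance is identical.
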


We begin with the following technical lemma from which Theorem~\ref{t:left-reverse-ca-alg} will follow without difficulty. 

\begin{lemma}
\label{l:symbol-var-left-inverse}
Let the notation and hypotheses be as in Theorem~\ref{t:left-reverse-ca-alg}. 
Then there exists $N\subset G$ finite and such that 
 $\tau^{-1}(x)(1_G)\in A$ depends uniquely on the restriction $x \vert_N$ for every configuration  $x\in \Gamma$.   
\end{lemma}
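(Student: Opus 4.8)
The plan is to exploit the fact that over an uncountable algebraically closed field $k$, a countable intersection of proper closed subvarieties cannot cover an irreducible variety of positive dimension, together with a Baire-category / descending-chain argument on the family of induced local maps $\tau_E^+$. First I would fix an exhaustion $G = \bigcup_{n \in \N} E_n$ of the countable monoid $G$ by an increasing sequence of finite subsets with $1_G \in E_0$ and $M E_{n} \subset E_{n+1}$, where $M$ is a memory set of $\tau$ with $1_G \in M$. By Lemma~\ref{l:induced-alg-map}, each induced map $\tau_{E_n}^+ \colon \Sigma_{ME_n} \to B^{E_n}$ is a morphism of algebraic varieties. Injectivity of $\tau$ forces the following separation property: for any two configurations $x, y \in \Sigma$ with $x(1_G) \neq y(1_G)$, there must be some $n$ and some $g \in E_n$ with $\tau(x)(g) \neq \tau(y)(g)$; the content of the lemma is that a \emph{single} finite $N = E_n$ works simultaneously for all such pairs, i.e. $\tau(x)\vert_N = \tau(y)\vert_N \Rightarrow x(1_G) = y(1_G)$.

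The key step is to argue by contradiction: suppose no finite $N$ works. Then for every $n$ there exist $x_n, y_n \in \Sigma$ with $\tau(x_n)\vert_{E_n} = \tau(y_n)\vert_{E_n}$ but $x_n(1_G) \neq y_n(1_G)$. I would like to pass to a limit, but $\Sigma$ is only closed, not compact, so instead I would work scheme-theoretically with the fibered product. Consider the closed subvariety $Z_n \subset \Sigma_{ME_n} \times_{B^{E_n}} \Sigma_{ME_n}$ consisting of pairs $(u,v)$ with $\tau_{E_n}^+(u) = \tau_{E_n}^+(v)$, and let $W_n \subset Z_n$ be the closed sublocus where $u(1_G) = v(1_G)$ (the diagonal condition on the $1_G$-coordinate). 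The hypothesis that $N = E_n$ fails says $W_n \subsetneq Z_n$. Now the compatibility of the restriction maps gives a projective system: projecting a pair over $E_{n+1}$ to $E_n$ maps $Z_{n+1}$ into $Z_n$ and $W_{n+1}$ into $W_n$. The crux is to show the inverse limit $\varprojlim Z_n$ over $k$-points is nonempty while $\varprojlim W_n$ is not — equivalently, that one can choose a $k$-point of $\varprojlim Z_n$ avoiding all $W_n$. This is where uncountability of $k$ enters: I would show $\varprojlim Z_n$ (a closed algebraic subshift of $\Sigma \times \Sigma$, since $\Sigma$ is a closed algebraic subshift) has a $k$-point $(x,y)$ with $x(1_G) \neq y(1_G)$, by a dimension/Baire argument ruling out that it is the countable union of the pullbacks of the $W_n$. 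Then $\tau(x) = \tau(y)$ but $x(1_G) \neq y(1_G)$, contradicting injectivity of $\tau$.

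More concretely, for the Baire-type argument I would set $\widetilde\Sigma = \{(x,y) \in \Sigma \times \Sigma \colon \tau(x) = \tau(y)\}$, which is a closed algebraic subshift of $(A \times A)^G$ whose restriction to each finite $E$ is $Z_{E}$ (one checks this using that $\tau$ is determined by $\tau_E^+$'s and $\Sigma$ is algebraic and closed). By injectivity, $\widetilde\Sigma$ is contained in the diagonal of $\Sigma \times \Sigma$, hence in particular every $(x,y) \in \widetilde\Sigma$ has $x(1_G) = y(1_G)$. But the failure of the lemma produced, for each $n$, a point $(x_n, y_n)$ in $Z_{E_n}$ not in $W_{E_n}$; since $\widetilde\Sigma_{ME_n} \subset Z_{E_n}$ and the inclusion $\widetilde\Sigma_{ME_n} \subset W_{E_n}$ holds by injectivity, one gets a nested sequence of nonempty closed strict subvarieties $Z_{E_n} \setminus W_{E_n}$, and I would invoke the standard fact (for which uncountability of $k$ is essential, cf. the Nullstellensatz-style argument in \cite{cscp-alg-ca}, \cite{cscp-alg-goe}) that a decreasing sequence of nonempty constructible subsets of varieties over an uncountable algebraically closed field, compatible under a projective system of dominant-enough maps, has nonempty inverse limit of $k$-points — producing the contradictory configuration pair.

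The main obstacle I anticipate is the non-compactness of $\Sigma$: unlike the finite-alphabet case, one cannot simply extract a convergent subsequence of $(x_n,y_n)$, so the limiting argument must be carried out algebro-geometrically via the inverse limit of the varieties $Z_{E_n}$ together with a careful use of the uncountability of $k$ to guarantee that the $k$-points of this inverse limit are not exhausted by the countably many ``bad'' closed conditions $W_{E_n}$. Handling the bookkeeping of memory-set shifts (so that restriction over $E_{n+1}$ genuinely controls $\tau_{E_n}^+$) and verifying that $\widetilde\Sigma$ is itself a closed algebraic subshift are the technical points to get right; once the inverse limit is shown nonempty with $x(1_G)\neq y(1_G)$, the contradiction with injectivity of $\tau$ is immediate, and then $N$ may be taken to be any $E_n$ witnessing the stabilization.
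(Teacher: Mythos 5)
Your proposal is correct and follows essentially the same route as the paper: arguing by contradiction, forming for each term $E_n$ of an exhaustion the nonempty constructible locus of pairs $(u,v)$ in $\Sigma_{ME_n}\times\Sigma_{ME_n}$ with $\tau_{E_n}^+(u)=\tau_{E_n}^+(v)$ but $u(1_G)\neq v(1_G)$ (your $Z_n\setminus W_n$ is the paper's $V_n$), invoking the inverse-limit lemma for projective systems of nonempty constructible sets over an uncountable algebraically closed field, and using closedness of $\Sigma$ to realize the limit point as a genuine pair of configurations contradicting injectivity. The only cosmetic difference is that the paper works directly with the single constructible set $V_n$ rather than your two-step $Z_n$, $W_n$ decomposition.
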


\begin{proof} 
Since  $\tau \colon \Sigma \to B^G$ is an algebraic cellular automaton, it admits a local defining map $\mu \colon \Sigma_M \to B$ associated with a memory set $M \subset G$ such that $1_G \in M$ and such that $\mu$ is a $k$-morphism of algebraic varieties. 
\par
As $G$ is countable, there exists an increasing sequence of finite subsets $(E_n)_{n \in \N}$ of $G$ such that $G= \cup_{n \in \N} E_n$ and $M \subset E_0$. 
\par 
For every $n \in \N$, 
we have a 
$k$-morphism $\tau_{E_n}^+ \colon \Sigma_{ME_n} \to B^{E_n}$ of algebraic varieties defined in Section \ref{s:induced-map}. Then $\tau_{E_n}^+ $ induces a  $k$-morphism of algebraic varieties 
\begin{equation*}
\Phi_n \coloneqq \tau_{E_n}^+ \times \tau_{E_n}^+ \colon  \Sigma_{ME_n}\times_k  \Sigma_{ ME_n}\to  B^{E_n}\times_k B^{E_n }.
\end{equation*} 
\par 
For every finite subset $E \subset G$, we denote respectively by $\Delta_E$ and $\Gamma_E$ the diagonal of $\Sigma_E \times_k \Sigma_E$ and $B^E \times_k B^E$. 
Consider the canonical projections  $\pi_{n} \colon \Sigma_{ME_n }\times_k  \Sigma_{ME_n}\to \Sigma_{\{1_G\}} \times_k  \Sigma_{\{1_G\}}$. Since $\pi_n$ and $\Phi_n$ are clearly algebraic, we obtain a constructible subset of $\Sigma_{ME_n}\times_k  \Sigma_{ME_n}$ given by:  
\begin{equation*} 
V_n \coloneqq  \Phi_n^{-1}(\Gamma_{E_n})\setminus \pi_{n}^{-1}(\Delta_{\{1_G\}}).
\end{equation*} 
\par 
By construction, we note that the set of  closed points of $V_n$ consists of 
 the couples $(u,v)$ with $u,v \in   \Sigma_{ME_n} $ such that $\tau_{E_n}^+(u)=\tau_{E_n}^+(v)$ and $u(1_G) \neq v(1_G)$. 
\par 
For the proof, we proceed by  supposing on the contrary that there does not exist a finite subset $N$ which satisfies the conclusion of the lemma. 
\par 
Therefore, the sets $V_n$ are nonempty for all $n \in \N$ and 
 we obtain a projective  system 
$(V_n)_{n \in \N }$ of nonempty constructible subsets of the  $k$-algebraic varieties $\Sigma_{ME_n}\times_k  \Sigma_{ME_n}$ 
with transition maps 
$p_{m,n} \colon V_m \to V_n$, for $m \geq n \geq 0$, induced by the canonical projections $\Sigma_{ME_m }\times_k  \Sigma_{ME_m} \to 
\Sigma_{ME_n }\times_k  \Sigma_{ME_n}$. 
\par 
Since the $k$ is an uncountable and algebraically closed field,  \cite[Lemma~B.2]{cscp-alg-ca} (see also  \cite[Lemma~3.2]{cscp-invariant-ca-alg}) implies that $\varprojlim_n V_n \neq \varnothing$. On the other hand, since $\Sigma$ is closed in the prodiscrete topology by hypothesis, we infer from \cite[Lemma~2.5]{phung-israel} that 
$\varprojlim_n \Sigma_{E_n M} = \Sigma$ and hence: 
\begin{equation*}
 \varprojlim_n V_n \subset \varprojlim_n \Sigma_{E_n M} \times \Sigma_{E_n M} = \Sigma \times \Sigma, 
\end{equation*}
\par 
Consequently, by the construction of the sets $V_n$, we can find $x,y \in \Sigma$ such that $(x,y) \in \varprojlim_n V_n$  thus  $\tau(x)= \tau(y)$ and $x(1_G) \neq y(1_G)$. In particular, $x \neq y$ and as a result, the map $\tau$ is not injective, which is a contradiction. The proof is thus complete. 
\end{proof} 
\par

We are now in the position to give the proof of Theorem~\ref{t:left-reverse-ca-alg}. 

\begin{proof}[Proof of Theorem~\ref{t:left-reverse-ca-alg}]
Let $N \subset G$ be the finite subset given by Lemma~\ref{l:symbol-var-left-inverse}. 
Then for every finite subset $E \subset G$ with $N \subset E$, we obtain a well-defined map: 
\begin{equation*}
     \eta_E \colon \Gamma_E \to A,\quad 
     x \mapsto \tau^{-1}(y)(1_G),
\end{equation*}
where $y \in \Gamma$ is an arbitrary configuration such that $y\vert_E= x\vert_E$ and $\tau^{-1}(y)$ denotes the unique element of $\Sigma$ whose image is $y$. Note that since $\tau$ is injective, $\tau^{-1}(y)$ is well-defined. 
Consequently, for $x \in \Sigma$ and $y= \tau(x)\vert_E$, we can write $\tau^{-1}(\tau(x))=x$ and thus: 
\begin{equation*}
    \eta_E(\tau(x)\vert_E)= \eta_E(y)=  \tau^{-1}(\tau(x))(1_G)=x(1_G).
\end{equation*} 
\par 
Hence, the proof is complete.  
\end{proof}

\section{Images of injective morphisms of symbolic varieties}
\label{s:main-morphism-symbolic-var}

We shall establish in this section the following finiteness result on the images of injective algebraic cellular automata which is the first main result of the paper. 

\begin{theorem}
\label{t:sft-image} 
Let $G$ be a countable monoid and let $X, Y$ be  algebraic varieties over an uncountable algebraically closed field $k$. Let $A= X(k)$, $B=Y(k)$ and let $\Sigma \subset A^G$ be a closed algebraic subshift. Suppose that $\tau \colon \Sigma \to B^G$ is an injective algebraic cellular automaton. Then for every subshift of finite type $\Delta \subset \Sigma$ of $A^G$, the image $\tau(\Delta)\subset B^G$ is a subshift of finite type.   
\end{theorem}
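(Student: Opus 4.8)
The plan is to reduce Theorem~\ref{t:sft-image} to the criterion established in Theorem~\ref{t:criterion-finite-type} by producing, out of the injectivity of $\tau$, a cellular automaton $\sigma\colon \Gamma \to A^G$ on $\Gamma = \tau(\Sigma)$ that is a genuine left inverse of $\tau$ and that shares a common memory window with $\tau$. The left reversibility result Theorem~\ref{t:left-reverse-ca-alg} gives exactly the raw material: a finite subset $N\subset G$ and maps $\eta_E\colon \Gamma_E\to A$ with $\eta_E(\tau(x)\vert_E)=x(1_G)$ for all $x\in\Sigma$. First I would replace the memory set $M$ of $\tau$ (with $1_G\in M$) by $M' \coloneqq M\cup N$, which is still a legitimate memory set for $\tau$ with $1_G\in M'$, and I would take $\eta \coloneqq \eta_{M'}\colon \Gamma_{M'}\to A$ as a candidate local defining map; by $G$-equivariance this defines a cellular automaton $\sigma\colon \Gamma\to A^G$ with $\sigma(\tau(x))=x$ for all $x\in\Sigma$, i.e. $\sigma\circ\tau = \mathrm{id}_\Sigma$, and $\tau,\sigma$ have the common memory set $M'$.

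Next I would handle the passage from a general subshift of finite type $\Delta\subset\Sigma$ to something on which Theorem~\ref{t:criterion-finite-type} directly applies. Since $\Delta$ is of finite type, by Lemma~\ref{l:window-change-sft} we may enlarge its defining window to any finite $W\supseteq M'$ and write $\Delta = \Sigma(A^G; W, \Delta_W)$; in particular, after enlarging, we may assume the defining window of $\Delta$ is $M'$ itself, so $\Delta = \Sigma(A^G; M', \Delta_{M'})$. The roles in Theorem~\ref{t:criterion-finite-type} are then played by $\Sigma \rightsquigarrow \Delta$, $\Gamma \rightsquigarrow \tau(\Delta)$, $M\rightsquigarrow M'$, $\tau\rightsquigarrow \tau\vert_\Delta$, $\sigma\rightsquigarrow \sigma\vert_{\tau(\Delta)}$: we have $\tau(\Delta) = (\tau\vert_\Delta)(\Delta)$, we have $\sigma\circ\tau$ the identity on $\Delta$ (restriction of the identity on $\Sigma$), both maps have common memory $M'$ containing $1_G$, and $\Delta = \Sigma(A^G; M', \Delta_{M'})$. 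One subtlety to check here is that $\sigma\vert_{\tau(\Delta)}$ lands in $A^G$ (it does, trivially) and that it is the cellular automaton with local map $\eta$ restricted appropriately — this is immediate since restricting a cellular automaton to an invariant subshift does not change its local defining data. Theorem~\ref{t:criterion-finite-type} then yields $\tau(\Delta) = \Sigma(B^G; (M')^2, \tau(\Delta)_{(M')^2})$, so $\tau(\Delta)$ is a subshift of finite type, which is the assertion.

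I expect the main obstacle to be bookkeeping rather than a deep difficulty: one must be careful that enlarging the memory set of $\tau$ from $M$ to $M'=M\cup N$ is legitimate (it is, by the standard fact that any finite superset of a memory set is again a memory set, with the local map precomposed with the restriction $\Sigma_{M'}\to\Sigma_M$), and that the local map $\eta_{M'}$ from Theorem~\ref{t:left-reverse-ca-alg} really does induce a well-defined $G$-equivariant uniformly continuous map on all of $\Gamma$ — for this one uses that $\eta_{M'}(y\vert_{M'})$ depends only on $y\vert_{M'}$ (built into the statement) and that $G$-equivariance of the resulting $\sigma$ plus $\sigma\circ\tau=\mathrm{id}$ follows by applying the identity $\eta_{M'}(\tau(x)\vert_{M'})=x(1_G)$ to the shifted configurations $g\star x$. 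Note that the algebraicity of $\tau$ and the hypotheses on $k$ (uncountable, algebraically closed) and on $G$ (countable) are used only through Theorem~\ref{t:left-reverse-ca-alg}; once $\sigma$ is in hand, the argument is purely combinatorial. Finally, the ``resp. sofic'' half of Theorem~\ref{t:sft-image-intro-main-alg} in the Introduction is not part of this statement and would be handled separately, so nothing further is needed here.
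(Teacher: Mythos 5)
Your proposal is correct and follows essentially the same route as the paper: Theorem~\ref{t:left-reverse-ca-alg} supplies the left-inverse local maps $\eta_E$, one assembles a cellular automaton $\sigma$ with $\sigma\circ\tau=\mathrm{id}_\Sigma$ sharing a memory window with $\tau$, and Theorem~\ref{t:criterion-finite-type} applied to $\tau\vert_\Delta$ and $\sigma\vert_{\tau(\Delta)}$ yields $\tau(\Delta)=\Sigma(B^G;(M')^2,\tau(\Delta)_{(M')^2})$. The one bookkeeping point to make explicit is that $M'$ must also be enlarged to contain the original defining window $D$ of $\Delta$ before Lemma~\ref{l:window-change-sft} can be invoked to write $\Delta=\Sigma(A^G;M',\Delta_{M'})$ --- precisely the paper's step ``$D\subset M=N$''.
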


\begin{proof} 
As $G$ is countable, there exists an increasing sequence of finite subsets $(E_n)_{n \in \N}$ of $G$ such that $G= \cup_{n \in \N} E_n$ and $1_G \subset E_0$. 
\par 
Let us fix an algebraic local defining map $\mu \colon \Sigma_M \to B$ of $\tau$ associated with a finite memory set $M\subset G$ such that $1_G \in M$. 
As $\Delta$ is a subshift of finite type, it admits a defining window $D\subset G$ so that 
$\Delta= \Sigma(A^G; D, \Delta_D)$ (see the definition  \eqref{e:sft}). We denote also  $\Gamma= \tau(\Sigma)$ and $X= \tau (\Delta)$. 
\par 
By Theorem~\ref{t:left-reverse-ca-alg}, we can find a finite subset $N \subset G$ such that 
for every finite subset $E \subset G$ with $N \subset E$, 
there exists a map   
$\eta_E \colon  \Gamma_E \to A$
such that for every $x\in \Sigma$, we have: 
\begin{equation}
\label{e:left-inverse-local-alg-1}
\eta_E(\tau(x)\vert_E)=x(1_G).
\end{equation} 
\par 
Up to enlarging $M$ and $N$, we can suppose without loss of generality that $D \subset M=N$. Hence, by Lemma~\ref{l:window-change-sft},  we can write: 
\begin{equation}
\label{e:main-ad-group-sigma-fsft-1}
    \Delta = \Sigma(A^G; M, \Delta_M).
\end{equation}
\par 
We define $Y\coloneqq \Sigma(B^G; M^2, X_{M^2})$ then it is clear that $Y \subset X$ is a subshift of finite of $B^G$. In the sequel, we will show that $Y \subset X$ and consequently $X=Y$ will be a subshift of finite type. 
\par 
Let us consider $\Lambda \coloneqq \Sigma(B^G; M^2, \Gamma_{M^2})\subset B^G$ and the cellular automaton $\sigma \colon \Lambda \to A^G$ which admits $M$ as a memory set and  $\eta_M\colon \Gamma_M \to A$ as the corresponding local defining map. Note that $\Gamma_M= \Lambda_M$ since $1_G \in M$ (cf. the proof of Theorem~\ref{t:criterion-finite-type}). 
\par 
We claim that 
$\sigma \circ \tau = \Id \colon \Sigma \to \Sigma$ is the identity map on $\Sigma$.
Indeed, 
we infer from  the $G$-equivariance of the cellular automata $\tau$ and $\sigma$ and 
from the property  \eqref{e:left-inverse-local-alg-1} 
that for all $x \in \Sigma$ and  $g \in G$, we have: 
\begin{align} 
\label{e:main-ad-grp-proof-1}
    \sigma(\tau(x))(g)
    & = \eta_M((g \star \tau(x))\vert_M) \nonumber \\
    & = \eta_M(\tau(g \star x)\vert_M)\nonumber \\
    & = (g \star x)(1_G) \nonumber \\
    &= x(g). \numberthis 
\end{align} 
\par 
Since $g\in G$ is arbitrary, $\sigma(\tau(x))=x$ and the claim is thus proved. In particular, since   $\Delta \subset \Sigma$,  the restriction  $\sigma \circ  \tau\vert_\Delta \colon \Delta \to A^G$ acts as the identity map on $\Delta$. 
\par 
Since $X=\tau(\Delta)$ and $\Delta= \Sigma(A^G; M, \Delta_M)$ by definition, we infer from Theorem~\ref{t:criterion-finite-type} applied to   $\tau\vert_\Delta \colon \Delta \to B^G$, $\sigma\vert_X\colon X \to A^G$, and $\Delta$ that 
\[
\tau(\Delta) = X = \Sigma(B^G; M^2, X_{M^2}).   
\]
\par 
Therefore, the image $\tau(\Delta)$ is a subshift of finite type of $B^G$. The proof is thus complete. 
 \end{proof}
 \par 
We prove below that the converse of Theorem~\ref{t:sft-image} also holds. Moreover, we see that not only being a subshift of finite type but also soficity are preserved under injective morphisms of symbolic algebraic varieties. Theorem~\ref{t:sft-image-intro-main-alg} in the Introduction is the consequence of the following result. 

\begin{theorem}
\label{t:converse-sft-image} 
Let $G$ be a countable monoid. Let $X, Y$ be algebraic varieties  over an uncountable algebraically closed field $k$. Suppose that $\tau \colon \Sigma \to Y(k)^G$ is an injective algebraic cellular automaton where $\Sigma \subset X(k)^G$ is a closed algebraic subshift. Then for every subshift $\Delta \subset X(k)^G$ such that $\Delta \subset \Sigma$, the following hold: 
\begin{enumerate} [\rm (i)] 
    \item $\Delta$ is a subshift of finite type if and only if  so is  $\tau(\Delta)$; 
    \item 
    $\Delta$ is a sofic subshift if and only if so is  $\tau(\Delta)$.   
\end{enumerate} 
\end{theorem}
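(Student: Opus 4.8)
The plan is to deduce Theorem~\ref{t:converse-sft-image} from the preceding results, handling the two equivalences in parallel since they share the same engine. For part (i), one direction is exactly Theorem~\ref{t:sft-image}: if $\Delta \subset \Sigma$ is a subshift of finite type, then $\tau(\Delta)$ is a subshift of finite type. For the converse direction, the idea is to produce a left inverse of $\tau$ that is itself a cellular automaton defined on a subshift of finite type containing $\tau(\Sigma)$, push $\tau(\Delta)$ through it, and observe that $\Delta$ is recovered as a preimage, hence inherits the finite-type property. Concretely, following the construction inside the proof of Theorem~\ref{t:sft-image}: Theorem~\ref{t:left-reverse-ca-alg} supplies a finite $N \subset G$ and maps $\eta_E$; enlarging $M$ so that $M = N \supset \{1_G\}$, one gets the cellular automaton $\sigma \colon \Lambda \to A^G$ with $\Lambda = \Sigma(B^G; M^2, \Gamma_{M^2})$, $\Gamma = \tau(\Sigma)$, satisfying $\sigma \circ \tau = \Id_\Sigma$ by the computation \eqref{e:main-ad-grp-proof-1}. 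Since $\tau(\Delta) \subset \Gamma \subset \Lambda$, it makes sense to apply $\sigma$ to $\tau(\Delta)$, and $\sigma(\tau(\Delta)) = \Delta$ because $\sigma \circ \tau$ is the identity on $\Sigma \supset \Delta$.

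Now suppose $\tau(\Delta)$ is a subshift of finite type. The key point is that $\Delta = \sigma(\tau(\Delta)) = \sigma^{-1}(\tau(\Delta)) \cap \Sigma$: indeed $\sigma^{-1}(\sigma(\tau(\Delta))) \cap \tau(\Sigma) \supset \tau(\Delta)$, and applying $\sigma$ again and using $\sigma\circ\tau = \Id$ pins down $\tau(\Delta)$ exactly as $\sigma^{-1}(\Delta) \cap \Gamma$. The cleaner route is: $\Delta = \sigma(\tau(\Delta))$ directly, and since $\sigma$ is a cellular automaton and $\tau(\Delta)$ is a subshift of finite type contained in $\Lambda$, we would like to conclude $\Delta$ is of finite type. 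For this I would invoke the standard fact that the image of a subshift of finite type under a cellular automaton is sofic, so $\Delta = \sigma(\tau(\Delta))$ is at least sofic; to upgrade "sofic" to "finite type" one then uses that $\tau$ restricted to $\Delta$ is an injective cellular automaton with image the subshift of finite type $\tau(\Delta)$, so Theorem~\ref{t:sft-image} is not quite applicable (it goes the wrong way), and instead one argues as follows: apply Theorem~\ref{t:criterion-finite-type} with the roles of $\Sigma$ and $\Gamma$ swapped, i.e. to $\sigma\vert_{\tau(\Delta)} \colon \tau(\Delta) \to A^G$ and $\tau\vert_\Delta \colon \Delta \to B^G$, noting $\tau(\Delta)$ is of finite type with $\tau(\Delta) = \Sigma(B^G; M^2, \tau(\Delta)_{M^2})$ after a window change (Lemma~\ref{l:window-change-sft}), that $\tau\vert_\Delta \circ \sigma\vert_{\tau(\Delta)} = \Id$ on $\tau(\Delta)$ (this needs $\tau\circ\sigma = \Id$ on $\Lambda$, which is the content of the final computation in the proof of Theorem~\ref{t:criterion-finite-type}), and that $\sigma\vert_{\tau(\Delta)}(\tau(\Delta)) = \Delta$. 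The conclusion of Theorem~\ref{t:criterion-finite-type} then reads $\Delta = \Sigma(A^G; M^2, \Delta_{M^2})$, so $\Delta$ is of finite type.

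For part (ii), the soficity equivalence, the plan is: if $\Delta$ is sofic, write $\Delta = \rho(\Theta)$ for a subshift of finite type $\Theta \subset A'^{\,G}$ and a cellular automaton $\rho$; then $\tau(\Delta) = (\tau \circ \rho)(\Theta)$, and $\tau\circ\rho$ is a cellular automaton (composition of cellular automata over the same monoid), so $\tau(\Delta)$ is sofic by definition. Conversely, if $\tau(\Delta)$ is sofic, then $\Delta = \sigma(\tau(\Delta))$ is the image of a sofic subshift under the cellular automaton $\sigma$; since the image of a sofic subshift under a cellular automaton is sofic (a sofic shift is the image of a subshift of finite type, and compose), $\Delta$ is sofic. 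One subtlety I would flag: the classical "image of an SFT under a cellular automaton is sofic" and "composition of cellular automata is a cellular automaton" are stated for group universes in \cite{lind-marcus}, \cite{weiss-sofic-shift}, but over a monoid the equivalent characterization via $G$-equivariance plus uniform continuity (\cite[Theorem~4.6]{csc-monoid-surj}, cited in the introduction) makes both facts immediate, so I would cite that; one should double-check that the restriction $\sigma\vert_{\tau(\Delta)}$ of a cellular automaton to a subshift is again a cellular automaton, which is clear from the memory-set description.

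The main obstacle I expect is bookkeeping the window sizes and making sure the left-inverse cellular automaton $\sigma$ is genuinely defined on a subshift of finite type containing $\tau(\Delta)$ — this is exactly why one passes to $\Lambda = \Sigma(B^G; M^2, \Gamma_{M^2})$ rather than working on $\Gamma = \tau(\Sigma)$ itself, which need not be of finite type. The hard part, conceptually, is the converse of (i): recognizing that $\Delta$ inherits the finite-type property not merely "soficity" requires the two-sided identity $\sigma\circ\tau = \Id_\Sigma$ and $\tau\circ\sigma = \Id_\Lambda$, i.e. one must verify $\tau(\sigma(y)) = y$ for $y$ in the finite-type shift $\tau(\Delta)$ (not just for $y \in \Gamma$), which is precisely the last computation in the proof of Theorem~\ref{t:criterion-finite-type} and is available once $\tau(\Delta) = \Sigma(B^G; M^2, \tau(\Delta)_{M^2})$; everything else is routine assembly of Theorem~\ref{t:sft-image}, Theorem~\ref{t:criterion-finite-type}, Theorem~\ref{t:left-reverse-ca-alg}, and Lemma~\ref{l:window-change-sft}.
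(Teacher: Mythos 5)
Your proposal is correct and follows essentially the same route as the paper: the forward direction of (i) is Theorem~\ref{t:sft-image}, the converse is obtained by constructing the left-inverse cellular automaton $\sigma$ on $\Lambda=\Sigma(B^G;M^2,\Gamma_{M^2})$ via Theorem~\ref{t:left-reverse-ca-alg} and then applying Theorem~\ref{t:criterion-finite-type} with the roles of $\tau$ and $\sigma$ swapped (after the same window enlargement), and (ii) follows by composing cellular automata in both directions. Your brief detours (the identity $\sigma(\tau(\Delta))=\sigma^{-1}(\tau(\Delta))\cap\Sigma$, and the claim that one needs $\tau\circ\sigma=\Id$ on all of $\Lambda$ rather than just on $\tau(\Delta)$, where it is immediate from $\sigma\circ\tau=\Id_\Sigma$) are harmless since you discard them in favor of the correct argument, which matches the paper's.
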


 \begin{proof}
 By Theorem~\ref{t:sft-image}, we know that if $\Delta$ is a subshift of finite type then so is the image   $\tau(\Delta)$. This proves one of the two implications of  (i). Now suppose that $X=\tau(\Delta)$ is a subshift of finite type. We denote $A=X(k)$, $B=Y(k)$, and $\Gamma= \tau(\Sigma)$. Then there exists a finite subset  $F\subset G$ such that 
$X= \Sigma(B^G; F, X_F)$. 
Let $\mu_M \colon \Sigma_M \to A$ be the  local defining map of $\tau$ associated with a  memory set $M\subset G$ such that $1_G \in M$. 
Theorem~\ref{t:left-reverse-ca-alg} implies that  there exists a finite subset $N \subset G$ such that 
for every finite subset $E \subset G$ containing  $E$, 
we have a map   
$\eta_E \colon  \Gamma_E \to A$
such that $
\eta_E(\tau(x)\vert_E)=x(1_G)$  for every $x\in \Sigma$. 
By replacing $M$ and $N$ by $(M \cup N)F$,  we can suppose without loss of generality that $D \subset M=N$. We infer from   Lemma~\ref{l:window-change-sft} that: 
\begin{equation}
\label{e:general-symbolic-1-1}
      X= \Sigma(A^G; M, X_M). 
\end{equation}
\par 
Let us define $Z\coloneqq \Sigma(A^G; M^2, \Delta_{M^2}) \subset A^G$ then  $\Delta \subset Z$. Moreover, we infer from Lemma~\ref{l:restriction-subshift-equality} that $Z_M = \Sigma_M$ since $M \subset M^2$ as $1_G \in M$. 
\par 
Therefore, the local defining map $\mu_M$ determines a cellular automaton 
$\pi \colon Z \to A^G$ whose restriction to $\Delta$ coincides   with $\tau$, that is, $\pi\vert_\Delta= \tau$.   
\par 
Denote $\Lambda \coloneqq \Sigma(A^G; M^2, \Gamma_{M^2}) \subset A^G$ and consider the cellular automaton $\sigma \colon \Lambda \to A^G$ admitting $\eta_M\colon \Gamma_M \to A$ as a local defining map (note that $\Gamma_M=\Lambda_M$ by Lemma~\ref{l:restriction-subshift-equality} as $M \subset M^2$).  
\par 
Since 
$\sigma \circ \tau$ is the identity map on $\Sigma$ as we have seen in the proof of Theorem~\ref{t:main-2-intro-admissible}, we have $\sigma(X)=\sigma(\tau(\Delta))= \Delta$. On the other hand, since $\tau(\Delta)= X$ and $\Delta \subset \Sigma$, 
we deduce immediately that the restriction 
$\pi \circ \sigma \vert_X= \tau \circ \sigma \vert_X$ acts as the identity map on $X$.    
\par 
Hence, it follows from Theorem~\ref{t:criterion-finite-type} applied to  the subshift of finite type $X$ and the cellular automata $\sigma\vert_X \colon X \to A^G$, $\tau\vert_\Delta \colon \Delta \to A^G$ that 
\[ 
\Delta = Z= \Sigma(A^G; M^2, \Delta_{M^2})
\] 
so $\Delta$ is a subshift of finite type. The point (i) is  proved. 
\par 
For (ii), assume that $\Delta$ is sofic so $\Delta= \gamma(W)$ for some subshift of finite type $W$ and some cellular automaton $\gamma$.  Since compositions of cellular automata are also cellular automata, we find that $\tau(\Delta)= \tau(\pi(X))$ is a sofic subshift. 
\par 
Conversely, suppose that $ \tau(\Delta) \subset A^G$ is a sofic subshift. Then  $\tau(\Delta)$ is the image of a subshift of finite type $W$ under a cellular automaton $\gamma$.  
Since $\Delta \subset \Sigma$ and $\sigma\circ \tau =\Id_\Sigma$, it follows that $\Delta = \sigma (\tau(\Delta)) =
\sigma (\gamma(W))$ 
is a sofic subshift. The proof is thus complete. 
 \end{proof}

\section{Admissible group subshifts}
\label{s:admissible-subshift} 
In this section, we recall and formulate direct extensions to the case of monoid universes the   notion of admissible group subshifts introduced in \cite{phung-2020} 
as well as their basic properties (see also \cite{phung-dcds}). 
 
 \subsection{Admissible Artinian group structures}  
\begin{definition} [cf.  \cite{phung-2020}, \cite{phung-dcds}]
\label{d:general-artinian-structure}
Given a group $A$. Suppose that for every $n\geq 1$, 
$\HH_n$ is a collection of subgroups of $A^n$ with the following properties: 
\begin{enumerate} 
\item 
$ \{1_A\}, A \in \HH_1$, and $\Delta \in \HH_2$ where   $\Delta = \{(a,a) \in A^2 \colon a \in A \}$ is the diagonal subgroup of $A^2$; 
\item 
for $m \geq n \geq 1$ and for every projection $\pi \colon A^m \to A^n$ induced by any  injection 
$\{1, \dots, n\} \to \{ 1, \dots, m\}$, one has $\pi(H_m) \in \HH_n$ and $\pi^{-1}(H_n) \in \HH_m$ 
for every $H_m \in \HH_m$ and $H_n \in \HH_n$;   
\item 
for each $n \geq 1$ and $H, K\in \HH_n$, one has $H \cap K \in \HH_n$; 
\item 
for each $n \geq 1$, every descending sequence $(H_k)_{k \geq 0}$,  where  
 $H_k \in \HH_n$ for every $k \geq 0$, eventually stabilizes. 
\end{enumerate} 
\par 
Let $\mathcal{H} \coloneqq (\mathcal{H}_n)_{n \geq 1}$. We say that  $(A, \HH)$, or $A$ if the context is clear, 
is an \emph{admissible Artinian group structure}. 
For every $n \geq 1$, elements of $\HH_n$ are called \emph{admissible subgroups} of $A^n$. 
\par
Note that in our definition, we require the extra condition $\Delta \in \HH_2$ in comparison to  \cite[Definition 9.1]{phung-2020}. 
\par 
If $E$ is a finite set,  then $A^E$ admits an admissible Artinian structure induced by that of $A^{\{1, \dots, |E|\}}$ via an arbitrary bijection 
$\{1, \dots, |E|\} \to E$.    
\end{definition}

\begin{example} 
\label{ex:canonical-admissible} 
(cf.~\cite[Examples~9.5,~9.7]{phung-2020}) 
An algebraic group $V$ over an algebraically closed field, 
resp. a compact Lie group $W$, resp. an Artinian (left or right) 
module $M$ over a ring $R$, 
admits a canonical admissible Artinian structure given by all algebraic subgroups of $V^n$, 
resp. by all closed subgroups of $W^n$, resp. 
by all $R$-submodules of $M^n$, for every $n \geq 1$. 
\end{example}

\begin{definition}  [cf. \cite{phung-2020}]
\label{d:admissible-homomorphism} 
Let $(A, \HH)$ be an admissible Artinian group structure.  
Let $m, n \geq 0$ and let $X, Y$ be respectively admissible subgroups of $A^m$ and $A^n$. 
We say that a group homomorphism  $\varphi \colon X \to Y$ 
is $\HH$-\emph{admissible} (or simply \emph{admissible}) if the graph $\Gamma_\varphi \coloneqq \{ (x, \varphi(x)) \colon x \in X \}\subset X \times Y$ 
is an admissible subgroup of $A^{m+n}$.  
\end{definition}
\par 
In Definition~\ref{d:admissible-homomorphism}, suppose that $\varphi \colon X \to Y $ is an admissible homomorphism. 
Then for all admissible subgroups $Z \subset X$ and $T \subset Y$, the groups   
$\varphi(Z)$, $\varphi^{-1} (T)$, and $Z \times T$ are admissible subgroups of $A^n$, $A^m$, and $A^{m+n}$ respectively. 
The identity map $\Id  \colon A \to A$ is  an admissible homomorphism and more generally, one has for every $n \geq 2$ that \[
\Delta^{(n)} = \{ (a, \dots, a) \in A^n\colon a \in A \} \in \HH_n. 
\]
\par 
Suppose that $\psi \colon Y \to Z$ is an admissible homomorphism where  $Z$ is an $\mathcal{H}$-admissible subgroup, then  $\psi \circ \varphi \colon X \to Z$ is also an admissible homomorphism. Note also that for $p \geq q \geq 0$, all the canonical projections $A^p \to A^q$ are  admissible homomorphisms.

\begin{example} 
\label{r:ad-ca} 
With respect to the canonical admissible Artinian 
structures of algebraic groups, resp. of compact Lie groups, resp. of Artinian groups, and of $R$-modules respectively (see Example~\ref{ex:canonical-admissible}), one find that all  homomorphisms of algebraic groups, resp. of compact Lie groups, resp. of Artinian groups, and morphisms of $R$-modules are admissible homomorphisms. 
\end{example}
 
\par 
The following auxiliary result says that the fibered products of admissible homomorphisms are also admissible homomorphisms. 
  \begin{lemma}
  \label{l:ad-mor}
Let $A$ be an admissible Artinian group structure. 
Let $m , n \geq 1$  and let $E$ be a finite set. 
Suppose that $\varphi_\alpha \colon A^m \to A^n$ is an admissible homomorphism for every $\alpha \in E$. 
Then the fibered product morphism  $\varphi_E \coloneqq (\varphi_\alpha)_{\alpha \in E} \colon A^m  \to (A^n)^E$, 
$\varphi_E(x) \coloneqq (\varphi_\alpha(x))_{\alpha \in E}$ for  all $x \in A^m$, 
is also an admissible homomorphism.      
\end{lemma}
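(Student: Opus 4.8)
The plan is to build up the graph of $\varphi_E$ from the graphs of the individual $\varphi_\alpha$ using only the closure properties listed in Definition~\ref{d:general-artinian-structure} and the remarks following Definition~\ref{d:admissible-homomorphism}. The key observation is that the graph $\Gamma_{\varphi_E} \subset A^m \times (A^n)^E$ consists of those tuples $(x, (y_\alpha)_{\alpha \in E})$ with $y_\alpha = \varphi_\alpha(x)$ for every $\alpha$, and this is exactly the intersection, over $\alpha \in E$, of the preimages of the graphs $\Gamma_{\varphi_\alpha}$ under suitable coordinate projections. Since $E$ is finite and $\mathcal H_{m + n|E|}$ is closed under finite intersections by condition~(3), it will suffice to check that each of these preimages is admissible, which follows from condition~(2).

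In detail, I would first fix an identification $A^m \times (A^n)^E \cong A^{m + n|E|}$ by ordering the coordinates: the first $m$ coordinates are the $A^m$-block, and for each $\alpha \in E$ the next $n$ coordinates are the $\alpha$-th copy of $A^n$. For each $\alpha \in E$, let $p_\alpha \colon A^{m+n|E|} \to A^{m+n}$ be the projection onto the $A^m$-block together with the $\alpha$-th $A^n$-block; this is induced by an injection $\{1,\dots,m+n\} \to \{1,\dots,m+n|E|\}$, so by condition~(2) the set $p_\alpha^{-1}(\Gamma_{\varphi_\alpha})$ lies in $\mathcal H_{m+n|E|}$, because $\Gamma_{\varphi_\alpha} \in \mathcal H_{m+n}$ by the admissibility hypothesis on $\varphi_\alpha$. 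Then one checks the set-theoretic identity
\[
\Gamma_{\varphi_E} = \bigcap_{\alpha \in E} p_\alpha^{-1}(\Gamma_{\varphi_\alpha}),
\]
which is immediate from the definitions: a point lies in the right-hand side iff, for every $\alpha$, its $A^m$-block $x$ and its $\alpha$-th $A^n$-block $y_\alpha$ satisfy $y_\alpha = \varphi_\alpha(x)$, i.e. iff $(y_\alpha)_{\alpha\in E} = \varphi_E(x)$. A finite intersection of members of $\mathcal H_{m+n|E|}$ is again in $\mathcal H_{m+n|E|}$ by condition~(3) (applied $|E|-1$ times, noting $E$ is nonempty; the case $|E|=0$ is degenerate and can be excluded or handled trivially). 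Hence $\Gamma_{\varphi_E}$ is an admissible subgroup of $A^{m+n|E|}$, so $\varphi_E$ is an admissible homomorphism.

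I do not expect a serious obstacle here; the only point requiring mild care is the bookkeeping of which injection $\{1,\dots,m+n\}\hookrightarrow\{1,\dots,m+n|E|\}$ induces $p_\alpha$, and the observation that $p_\alpha$ is of the exact form allowed in condition~(2) of Definition~\ref{d:general-artinian-structure} (a projection induced by a coordinate injection), rather than a more general projection. One should also note at the outset that $\varphi_E$ is a group homomorphism into $(A^n)^E$, which is clear since each component $\varphi_\alpha$ is, and that $(A^n)^E \cong A^{n|E|}$ carries the induced admissible Artinian structure as recorded at the end of Definition~\ref{d:general-artinian-structure}, so that it makes sense to speak of $\varphi_E$ being admissible.
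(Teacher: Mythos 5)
Your proposal is correct. Note that the paper does not actually supply an argument for Lemma~\ref{l:ad-mor}: its ``proof'' is a citation to \cite[Lemma~5.7]{phung-dcds}, so there is nothing in the text to compare against line by line. Your self-contained argument is the natural one and it goes through: the identity $\Gamma_{\varphi_E} = \bigcap_{\alpha \in E} p_\alpha^{-1}(\Gamma_{\varphi_\alpha})$ is verified correctly, each $p_\alpha$ is indeed a projection induced by a coordinate injection $\{1,\dots,m+n\}\hookrightarrow\{1,\dots,m+n|E|\}$ so condition~(2) of Definition~\ref{d:general-artinian-structure} applies to the preimages, and condition~(3) (iterated finitely often) closes the intersection. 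Your side remarks are also the right ones to make: the independence of the identification $(A^n)^E \cong A^{n|E|}$ from the chosen bijection is covered because condition~(2) allows arbitrary coordinate injections, hence permutations; the fact that $\varphi_E$ is a group homomorphism is immediate componentwise; and the degenerate case $E=\varnothing$ is harmless. The only cosmetic point is that one should observe $A^m \in \HH_m$ (e.g.\ as $\pi^{-1}(A)$ for a coordinate projection $\pi\colon A^m \to A$, using $A \in \HH_1$ and condition~(2)) so that the graphs in question are graphs of admissible homomorphisms between admissible subgroups in the sense of Definition~\ref{d:admissible-homomorphism}, but this is routine.
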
 

\begin{proof}
See \cite[Lemma~5.7]{phung-dcds}. 
\end{proof}

 \subsection{Admissible group subshifts}

We recall the natural notion of admissible group subshifts 
introduced in \cite{phung-2020}. However, we do not require the closedness property for subshifts  in this paper. 

\begin{definition}
\label{d:admissible-grp-subshift}
Let $G$ be a monoid and let $A$ be an admissible Artinian group structure. 
A subshift $\Sigma \subset A^G$ is called an \emph{admissible group subshift} 
if $\Sigma_E$ is an admissible subgroup of $A^E$ 
for every finite subset $E\subset G$.    
\end{definition}
\par 
The following example gives a natural class of admissible group subshifts. 
\par 
\begin{example} 
\label{ex:canonical-admissible-art-module} 
Let $G$ be a monoid and let $A$ be an Artinian module over a ring  $R$. Note that $A^G$ is an $R$-module with componentwise operations. Then every subshift 
$\Sigma \subset A^G$ which is also an $R$-submodule is automatically  an admissible group subshift of $A^G$ with respect to 
 the canonical admissible Artinian group  structure on $A$ (see Example~\ref{ex:canonical-admissible}).
 \end{example}

\subsection{Admissible group cellular automata} 

We extend the definition of admissible group cellular automata given  \cite[Definition~5.9]{phung-dcds} as follows. 

\begin{definition} 
Let $G$ be a monoid and let $A$ be an admissible Artinian group structure. 
Let $m, n \geq 1$ and let $\Sigma \subset (A^{m})^G$,   
$\Lambda \subset (A^{n})^G$ 
be admissible group subshifts. 
A map $\tau \colon \Sigma \to \Lambda$ is called an \emph{admissible group cellular automaton}  
if $\tau$ admits a finite memory set $M \subset G$ and an associated local defining map 
$\mu \colon \Sigma_M \to A^m$ which is an admissible homomorphism such that: 
\begin{equation*}
    \tau(x)(g) = \mu((g \star x)\vert_M), \quad \text{for all } x \in \Sigma, g \in G. 
\end{equation*}
\end{definition} 
\par 
Observe that we no longer  require admissible group cellular automata to extend to the full shift as in \cite[Definition~5.9]{phung-dcds}. We have the following key technical result: 
\par

\begin{lemma}
\label{l:tau-E-admissible}
Let $G$ be a monoid and let $A$ be an admissible Artinian group structure. 
Let $E \subset G$ be a finite subset and let $m, n \geq 1$. Let $\Sigma \subset (A^m)^G$ be an admissible group subshift. 
Let $\tau \colon \Sigma \to (A^n)^G$ be an admissible group cellular automaton 
with a given memory set $M \subset G$.  
Then the induced map  
$\tau_{E}^{+} \colon \Sigma_{M E} \to (A^n)^E$ defined by  
$\tau_E^{+}(c) \coloneqq \tau(x)\vert_E$ for all $c \in \Sigma_{ME}$ and  $x \in \Sigma$ such that $x \vert_{ME}=c$ 
is an admissible homomorphism. 
\end{lemma}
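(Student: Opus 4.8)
The plan is to realize $\tau_E^+$ as a composition and fibered product of admissible homomorphisms, and then invoke Lemma~\ref{l:ad-mor} together with the closure properties of admissible homomorphisms recorded after Definition~\ref{d:admissible-homomorphism}. This mirrors the argument of Lemma~\ref{l:induced-alg-map} for the algebraic case, with ``morphism of algebraic varieties'' replaced throughout by ``admissible homomorphism'', so the only genuine work is checking that each elementary building block is admissible.

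First I would observe that, as in Section~\ref{s:induced-map}, the map $\tau_E^+ \colon \Sigma_{ME} \to (A^n)^E$ is the fibered product of its components $T_g \colon \Sigma_{ME} \to A^n$ indexed by $g \in E$, where $T_g(c) = \mu((g \star x)\vert_M)$ for any $x \in \Sigma$ with $x\vert_{ME} = c$; this is well-defined precisely because $\mu$ depends only on the $M$-window. Next, each $T_g$ factors as $T_g = \mu \circ \rho_g$, where $\rho_g \colon \Sigma_{ME} \to \Sigma_{Mg}$ is the restriction-and-shift map sending $c$ to $(g \star x)\vert_M$ (again independent of the choice of $x$), and $\mu \colon \Sigma_M \to A^n$ (identifying $\Sigma_{Mg}$ with $\Sigma_M$ via the bijection $M \to Mg$, $h \mapsto hg$) is admissible by hypothesis. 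The map $\rho_g$ is a canonical projection of ambient product groups $A^{ME} \to A^{Mg}$ restricted to $\Sigma_{ME}$; since canonical projections $A^p \to A^q$ are admissible homomorphisms and $\Sigma_{ME}$, $\Sigma_{Mg}$ are admissible subgroups (because $\Sigma$ is an admissible group subshift), the restriction $\rho_g$ is admissible. Composition of admissible homomorphisms being admissible, each $T_g$ is admissible.

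Finally, I would apply Lemma~\ref{l:ad-mor} (with the ambient group $A^m$ there replaced by the admissible subgroup $\Sigma_{ME} \subset A^{mME}$ and target $A^n$, indexing set $E$) to conclude that the fibered product $\tau_E^+ = (T_g)_{g \in E} \colon \Sigma_{ME} \to (A^n)^E$ is an admissible homomorphism, which is exactly the claim.

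I expect the main subtlety — rather than a true obstacle — to be bookkeeping: Lemma~\ref{l:ad-mor} is stated for homomorphisms $A^m \to A^n$ out of a full power, whereas here the source is an admissible subgroup $\Sigma_{ME}$; one should either note that the lemma's proof applies verbatim to admissible subgroups as sources, or first extend each $T_g$ along an auxiliary inclusion. Since an admissible homomorphism defined on an admissible subgroup composes with the inclusion to an admissible homomorphism on the ambient power only if one can fill in the values, the cleaner route is to simply remark that the proof of Lemma~\ref{l:ad-mor} in \cite[Lemma~5.7]{phung-dcds} works with any admissible subgroup in place of $A^m$. Once that is granted, the rest is routine and the proof is complete.
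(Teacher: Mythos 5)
Your proposal is correct and follows essentially the same route as the paper, which simply invokes Lemma~\ref{l:ad-mor} and the analogous argument of \cite[Lemma~9.20]{phung-2020}: decompose $\tau_E^+$ into its components $T_g = \mu \circ \rho_g$, check each is admissible (projection restricted to an admissible subgroup composed with the admissible local defining map), and apply the fibered-product lemma. Your closing remark about Lemma~\ref{l:ad-mor} being stated for sources of the form $A^m$ rather than an admissible subgroup $\Sigma_{ME}$ is a fair piece of bookkeeping that the paper leaves implicit, and your resolution of it is the intended one.
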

 
\begin{proof} 
Using Lemma~\ref{l:ad-mor}, the proof of the lemma is similar to  the proof of  \cite[Lemma~9.20]{phung-2020} in the case of group universes. 
\end{proof} 

\par 

The following theorem provides us with the methods to produce many  admissible group subshifts. 

\begin{theorem} 
\label{t:produce-admissible-grp-shift} 
Let $G$ be a countable monoid  
and let $A$ be an admissible Artinian group structure. 
Then the following hold for all $m, n \geq 1$: 
\begin{enumerate} [\rm (i)]
\item 
if $D \subset G$ is a finite subset and $P \subset A^D$ is an admissible subgroup,  
then $\Sigma(A^G; D, P)$ is an admissible group subshift of $A^G$. 
\item 
if $\tau \colon (A^m)^G \to (A^n)^G$ is an admissible group cellular automaton and 
  $\Sigma \subset (A^m)^G$, $\Lambda \subset (A^n)^G$  
are admissible group subshifts, then   
$\tau(\Sigma)$, $\tau^{-1}(\Lambda)$ are  respectively 
admissible group subshifts of $(A^n)^G$, $(A^m)^G$. 
\end{enumerate} 
\end{theorem}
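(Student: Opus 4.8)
The plan is to reduce everything to checking that each finite restriction of the two prospective subshifts is an admissible subgroup, and to note first that (i) is a special case of the preimage half of (ii). Both $\tau(\Sigma)$ and $\tau^{-1}(\Lambda)$ are $G$-invariant since $\tau$ is $G$-equivariant, and they are subgroups of $(A^n)^G$, resp.\ $(A^m)^G$, since $\tau$ is a group homomorphism (it is built coordinatewise from the admissible, hence homomorphic, local map). So each is a subshift and one only has to show, for every finite $E\subset G$, that $\tau(\Sigma)_E$ and $\tau^{-1}(\Lambda)_E$ are admissible subgroups. For (i) I would write $\Sigma(A^G;D,P)=\kappa^{-1}(P^G)$, where $\kappa\colon A^G\to (A^D)^G$ is the admissible group cellular automaton with memory set $D$ and local map $\Id_{A^D}$ (admissible, since the diagonal of $A^D\times A^D$ is a finite intersection of pullbacks of $\Delta\in\HH_2$), and where $P^G$ is an admissible group subshift of $(A^D)^G$ because its finite restrictions $P^E$ are finite products of copies of the admissible subgroup $P$; hence it is enough to prove (ii).

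For $\tau(\Sigma)$, fix a memory set $M$ with $1_G\in M$. For finite $E\subset G$ one has $\tau(\Sigma)_E=\{\tau(x)\vert_E:x\in\Sigma\}=\tau_E^+(\Sigma_{ME})$, the image of the admissible subgroup $\Sigma_{ME}\subset (A^m)^{ME}$ under the induced map $\tau_E^+$, which is an admissible homomorphism by Lemma~\ref{l:tau-E-admissible} (this is where Lemma~\ref{l:ad-mor} enters). Since the image of an admissible subgroup under an admissible homomorphism is admissible, $\tau(\Sigma)_E$ is admissible; this settles $\tau(\Sigma)$.

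For $\tau^{-1}(\Lambda)$, fix finite $E$ and an increasing exhaustion $G=\bigcup_k E_k$ by finite sets with $ME\subset E_0$, and put $W_k:=(\tau_{E_k}^+)^{-1}(\Lambda_{E_k})\subset (A^m)^{ME_k}$; each $W_k$ is an admissible subgroup, being the preimage of the admissible subgroup $\Lambda_{E_k}$ under the admissible homomorphism $\tau_{E_k}^+$. Using $(\Lambda_{E_{k+1}})_{E_k}=\Lambda_{E_k}$ and the compatibility of the induced local maps one gets $(W_{k+1})_{ME_k}\subset W_k$, so $\bigl((W_k)_E\bigr)_k$ is a descending chain of admissible subgroups of $(A^m)^E$ which, by the Artinian axiom in Definition~\ref{d:general-artinian-structure}, stabilizes to an admissible subgroup $Q$. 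The inclusion $\tau^{-1}(\Lambda)_E\subset Q$ is immediate. For the reverse inclusion I would run a compactness argument: for each $l$ let $Q^{(l)}$ be the stable value of $\bigl((W_k)_{ME_l}\bigr)_k$; the restriction maps $Q^{(l+1)}\to Q^{(l)}$ are surjective, so $\varprojlim_l Q^{(l)}$ is non-empty and surjects onto each $Q^{(l)}$, hence onto $Q$; and since $Q^{(l)}\subset W_l$, any $y\in\varprojlim_l Q^{(l)}$ satisfies $\tau(y)\vert_{E_l}=\tau_{E_l}^+(y\vert_{ME_l})\in\Lambda_{E_l}$ for all $l$, so $\tau(y)$ lies in the closure $\overline{\Lambda}$. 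When $\Lambda$ is closed this gives $\varprojlim_l Q^{(l)}\subset\tau^{-1}(\Lambda)$, whence $Q=\bigl(\varprojlim_l Q^{(l)}\bigr)_E\subset\tau^{-1}(\Lambda)_E$ and $\tau^{-1}(\Lambda)_E=Q$; in particular this already finishes (i), where $\Lambda=P^G$ is closed.

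The hard part will be the very last step for a possibly non-closed $\Lambda$: the compactness argument only produces a configuration whose image lies in $\overline{\Lambda}$, and one must upgrade this to membership in $\Lambda$ (equivalently, show $\tau^{-1}(\Lambda)$ and $\tau^{-1}(\overline{\Lambda})$ have the same finite restrictions). I expect to handle this by correcting the candidate configuration $y$, outside a large window containing $E$, by an element of $\Ker\tau$ chosen to absorb the tail discrepancy between $\tau(y)$ and a genuine element of $\Lambda$ — a correction whose existence again follows from the Artinian stabilization of the relevant tail subgroups. All remaining checks (the relations $(W_{k+1})_{ME_k}\subset W_k$, the surjectivity of $Q^{(l+1)}\to Q^{(l)}$, and $\bigl(\varprojlim_l Q^{(l)}\bigr)_E=Q$) are routine once Lemmas~\ref{l:ad-mor} and~\ref{l:tau-E-admissible} and the descending chain condition are in hand.
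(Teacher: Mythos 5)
The paper offers no internal proof of this theorem --- it simply cites \cite[Theorem~5.11]{phung-dcds} --- so there is nothing to compare your route against line by line. On the merits: your reduction of (i) to the preimage half of (ii) with $\Lambda=P^G$ (which is closed), your treatment of the image half via the identity $\tau(\Sigma)_E=\tau_E^+(\Sigma_{ME})$ together with Lemma~\ref{l:tau-E-admissible}, and your Mittag--Leffler/inverse-limit argument for $\tau^{-1}(\Lambda)_E$ when $\Lambda$ is closed are all correct, and they are essentially the standard arguments in this circle of papers. In particular the image half --- the only part of the theorem actually invoked later (in Corollary~\ref{c:image-sft-again-sft}) --- is fully established by your argument.

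The genuine gap is exactly where you flag it, and the repair you sketch cannot work as stated. Since Definition~\ref{d:admissible-grp-subshift} explicitly drops the closedness requirement, the preimage half of (ii) must be proved for a possibly non-closed $\Lambda$, and your compactness argument only produces, for a given $q$ in the stabilized group $Q$, a configuration $y$ with $y\vert_E=q$ and $\tau(y)\in\overline{\Lambda}$. You propose to correct $y$ by an element of $\Ker\tau$; but if $z\in\Ker\tau$ then $\tau(yz)=\tau(y)\tau(z)=\tau(y)$, so no such correction can move $\tau(y)$ from $\overline{\Lambda}\setminus\Lambda$ into $\Lambda$. What one would actually need is some $z$ with $z\vert_E=e^E$ and $\tau(z)=\mu\,\tau(y)^{-1}$ for some $\mu\in\Lambda$; the available information only says that $\tau(y)$ agrees with some element of $\Lambda$ on each $E_k$, and the discrepancy $\mu\,\tau(y)^{-1}$ need not lie in the image of $\tau$ at all, so the Artinian stabilization of ``tail subgroups'' does not obviously produce such a $z$. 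As written, your argument therefore proves (i), the image half of (ii), and the preimage half of (ii) only under the additional hypothesis that $\Lambda$ is closed; you must either add that hypothesis (which covers every use of the theorem in this paper) or supply a genuinely new idea for the non-closed case.
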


\begin{proof}
See \cite[Theorem~5.11]{phung-dcds}. 
\end{proof}

\section{Left inverses of injective admissible group cellular automata} 
\label{s:techno-left-inverse}

We begin with the following auxiliary technical result on the left reversibility of injective admissible cellular automata on closed admissible group subshifts. 

\begin{lemma}
\label{l:techincal-left-inverse}
Let $G$ be a countable monoid. 
Let $A$ be an admissible Artinian group structure and let $\Sigma\subset A^G$ be a closed admissible group subshift. 
Let $\tau \colon \Sigma \to A^G$ be an injective admissible group  cellular automaton. 
Then there exists a finite subset $N\subset G$ such that 
\begin{enumerate}
\item[(P)]  
 \textit{for every} $x\in \tau(A^G)$, the element  
 $\tau^{-1}(x)(1_G)\in A$ \textit{depends uniquely on the restriction}  $x \vert_N$.
\end{enumerate}
\end{lemma}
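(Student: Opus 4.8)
The plan is to mimic the proof of Lemma~\ref{l:symbol-var-left-inverse}, replacing the algebro-geometric projective-system argument (based on \cite[Lemma~B.2]{cscp-alg-ca}) by the corresponding descending-chain/compactness argument available for admissible Artinian group structures. First I would fix an admissible local defining map $\mu \colon \Sigma_M \to A$ of $\tau$ associated with a memory set $M \subset G$ with $1_G \in M$, and choose an increasing exhaustion $G = \bigcup_{n \in \N} E_n$ by finite subsets with $M \subset E_0$. For each $n$, Lemma~\ref{l:tau-E-admissible} gives that the induced map $\tau_{E_n}^+ \colon \Sigma_{ME_n} \to A^{E_n}$ is an admissible homomorphism, and hence so is the fibered product $\Phi_n \coloneqq \tau_{E_n}^+ \times \tau_{E_n}^+ \colon \Sigma_{ME_n} \times \Sigma_{ME_n} \to A^{E_n} \times A^{E_n}$ (by Lemma~\ref{l:ad-mor}, combined with the fact that products of admissible subgroups are admissible).

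Next I would package the obstruction into a descending chain of admissible subgroups. For each $n$, consider $W_n \coloneqq \Phi_n^{-1}(\Delta^{(2)}_{E_n})$, the set of pairs $(u,v) \in \Sigma_{ME_n}^2$ with $\tau_{E_n}^+(u) = \tau_{E_n}^+(v)$; since the diagonal $\Delta^{(2)}_{E_n} \subset A^{E_n} \times A^{E_n}$ is admissible, $W_n$ is an admissible subgroup of $A^{ME_n} \times A^{ME_n}$. The statement (P) fails for the window $E_n$ precisely when some element of $W_n$ has differing first coordinates at $1_G$, i.e.\ when $W_n \not\subset \pi_n^{-1}(\Delta^{(2)}_{\{1_G\}})$ where $\pi_n$ projects to the $1_G$-coordinates. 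Now intersect with this last admissible subgroup: set $K_n \coloneqq W_n \cap \pi_n^{-1}(\Delta^{(2)}_{\{1_G\}})$, an admissible subgroup, and pull everything back to a fixed ambient level. Concretely, using the transition projections $A^{ME_m}\times A^{ME_m} \to A^{ME_n}\times A^{ME_n}$ for $m \ge n$, one gets that the preimages of $W_n / K_n$ — or more cleanly, the images in $W_0 \times W_0$ obtained by restricting — form a descending sequence of admissible subgroups of some $A^{ME_0}\times A^{ME_0}$, which by the descending chain condition (axiom (4) of Definition~\ref{d:general-artinian-structure}) stabilizes. The value $n_0$ at which stabilization occurs, together with the fact that $\varprojlim_n \Sigma_{ME_n} = \Sigma$ (from closedness of $\Sigma$ and \cite[Lemma~2.5]{phung-israel}), forces that if (P) failed for all windows then it would fail compatibly along the inverse system, producing $x, y \in \Sigma$ with $\tau(x) = \tau(y)$ but $x(1_G) \ne y(1_G)$, contradicting injectivity of $\tau$. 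Taking $N = ME_{n_0}$ then works.

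The main obstacle, and the point requiring care, is that the admissible Artinian framework gives us a descending chain condition on admissible subgroups of a \emph{fixed} $A^k$, not directly a Mittag-Leffler or nonemptiness statement for inverse limits of the $W_n$ living in variable ambient groups. So the argument cannot be a verbatim transcription of Lemma~\ref{l:symbol-var-left-inverse}; instead one must transfer the failure of (P) across all windows into a single descending chain in one ambient admissible group, apply axiom (4), and then use the stabilization to extract the offending pair. A clean way to do this: assume (P) fails for every finite $N$, so in particular for every $E_n$; then for each $n$ the admissible subgroup $L_n \subset \Sigma_{ME_0}\times \Sigma_{ME_0}$ consisting of those pairs of germs that \emph{extend} to a pair in $W_n$ violating equality at $1_G$ is nonempty, and $(L_n)_n$ is descending, hence stabilizes to a nonempty admissible subgroup $L_\infty$; an element of $L_\infty$ is, by construction and a diagonal/compactness argument over the $E_n$ (each relevant fibre being nonempty and the $A$-valued coordinates lying in descending admissible, hence eventually stable, subgroups), the $E_0$-restriction of a genuine pair $(x,y) \in \Sigma \times \Sigma$ with $\tau(x)=\tau(y)$ and $x(1_G)\ne y(1_G)$.

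Finally I would record the conclusion: with $N \coloneqq ME_{n_0}$ finite, for every $x \in \tau(A^G)$ (a fortiori for every $x \in \tau(\Sigma)$, which is what the later applications need — note $\Sigma \subset A^G$ so $\tau(\Sigma) \subset \tau(A^G)$ only after extending $\tau$, but the statement as written concerns $\tau(A^G)$, so one should either assume $\tau$ extends to the full shift as an admissible group cellular automaton by Theorem~\ref{t:produce-admissible-grp-shift}, or read the lemma with $\tau(\Sigma)$ in place of $\tau(A^G)$), the preimage value $\tau^{-1}(x)(1_G)$ is determined by $x\vert_N$, since two preimages agreeing on $N$ would, by $G$-equivariance and the stabilization, agree everywhere. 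This completes the proof.
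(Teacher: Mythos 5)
Your overall strategy---trading the constructible-set inverse-limit argument of Lemma~\ref{l:symbol-var-left-inverse} for a descending-chain argument in the admissible Artinian framework---is the right instinct, and you correctly identify the central difficulty: axiom (4) of Definition~\ref{d:general-artinian-structure} applies only to admissible \emph{subgroups} of a fixed $A^k$. But your resolution of that difficulty does not work as written. The set of pairs $(u,v)\in \Sigma_{ME_n}\times\Sigma_{ME_n}$ with $\tau_{E_n}^+(u)=\tau_{E_n}^+(v)$ and $u(1_G)\neq v(1_G)$ is $W_n\setminus K_n$, a \emph{difference} of subgroups, and the set $L_n$ of level-$0$ germs extending to such a pair is the projection of a set difference: it is not a subgroup (it need not contain the identity and is not closed under the group operation), so the descending chain condition cannot be applied to $(L_n)_n$. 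This is a genuine gap, not a presentational one, and the concluding ``diagonal/compactness argument'' would in any case require the surjectivity of the stabilized transition maps, which you do not establish.

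The paper closes exactly this gap by exploiting the group structure \emph{before} setting up any chain: since $\tau^{-1}$ is a group homomorphism on $\Gamma=\tau(\Sigma)$, a violating pair $x_n,y_n$ with $x_n\vert_{E_n}=y_n\vert_{E_n}$ and $\tau^{-1}(x_n)(1_G)\neq\tau^{-1}(y_n)(1_G)$ is replaced by the single configuration $z_n=x_ny_n^{-1}$, which satisfies $z_n\vert_{E_n}=e^{E_n}$ and $\tau^{-1}(z_n)(1_G)\neq e$. Everything then lives inside the genuine admissible subgroups $U_n=\Ker(\tau_{E_n}^+)\subset\Sigma_{E_nM}$, which contain the witnesses $w_n=\tau^{-1}(z_n)\vert_{E_nM}$. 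The chains $(\pi_{m,n}(U_m))_{m\ge n}$ stabilize to subgroups $W_n$ whose transition maps are surjective (a Mittag--Leffler argument), and one lifts the single element $u_0=\pi_{r(0),0}(w_{r(0)})$---which still satisfies $u_0(1_G)\neq e$ because projections preserve the $1_G$-coordinate---to a coherent sequence converging to some $u\in\Sigma$ (closedness of $\Sigma$ is used here) with $\tau(u)=e^G$ and $u\neq e^G$, contradicting injectivity. The point is that the ``bad'' condition $u(1_G)\neq e$ is never asked to cut out a subgroup; it is carried along by one explicit element of the stable image. Your proposal is missing this reduction to kernels. (Your side remark that the statement should be read with $\tau(\Sigma)$ in place of $\tau(A^G)$ is a fair catch of an inconsistency in the paper's wording.)
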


\begin{proof}
Let us choose a finite memory set $M \subset G$ 
of $\tau$ such that $1_G \in M$. Let $\mu\colon \Sigma_M \to A$ be the corresponding local defining map of $\tau$. 
\par 
Since $G$ is countable, it admits an increasing sequence of finite subsets 
\[
M=E_0 \subset E_1 \subset \dots \subset E_n \subset  \cdots
\]
such that $G=\cup_{n\in \N} E_n$, i.e., $(E_n)_{n \in \N}$ forms an exhaustion of the monoid $G$. 
\par 
Let $\Gamma= \tau(A^G)$. Note that  since $\tau \colon A^G \to A^G$ is an injective group homomorphism, 
$\tau^{-1} \colon \Gamma \to A^G$ is clearly a group homomorphism. 
\par
We proceed by assuming on the contrary that there does not exist a finite subset $N\subset G$ verifying the property $(P)$. 
Consequently, there exist for every $n \geq 0$ two  configurations $x_n, y_n \in \Gamma$ such that we have 
\begin{equation}
\label{e:x-n-y-n-tau}
x_n\vert_{E_n}=y_n\vert_{E_n} \quad \text{and} \quad \tau^{-1}(x_n)(1_G)\neq \tau^{-1}(y_n)(1_G). 
\end{equation}
\par 
We denote $z_n\coloneqq x_ny_n^{-1} \in A^G$ for every $n \geq 0$. Let $e \in A$ be the neutral element. Then it follows that 
$z_n\vert_{E_n}=e^{E_n}$. Since $\tau^{-1}$ is a group homomorphism, we infer from \eqref{e:x-n-y-n-tau} that  $\tau^{-1}(z_n)(1_G)\neq e$.
\par 
Therefore, for 
$w_n\coloneqq \tau^{-1}(z_n)\vert_{E_nM}\in A^{E_nM}$, we find that: 
\begin{equation}
\label{e:tau-e-n-1}
\tau_{E_n}^+(w_n)=z_n\vert_{E_n}= e^{E_n} \quad \text{and } \quad   w_n(1_G) \neq e 
\end{equation}
where $\tau_{E_n}^+ \colon \Sigma_{E_n M } \to A^{E_n}$ is the admissible group homomorphism induced by the local defining map $\mu$ (see Lemma~\ref{l:tau-E-admissible}). 
\par 
For every $n\in \N$, we have an admissible subgroup of $\Sigma_{E_nM}$ defined by 
\[
U_n\coloneqq \Ker(\tau_{E_n}^+) \subset \Sigma_{E_nM} \subset A^{E_n M}.
\]
\par 
Note that $e^{E_n M}, w_n \in U_n$ for each $n\in \N$.  
Consider the canonical projection 
$\pi_{m,n} \colon A^{E_m M} \to A^{E_n M}$. Then 
it is clear that 
$\pi_{k,n}(U_{k}) \subset \pi_{m,n}(U_m)$ for all $k \geq m \geq n \geq 0$ since: 
\[
\pi_{k,n}(U_{k}) = \pi_{m,n}(\pi_{k,m}(U_{k})) 
\subset \pi_{m,n}(U_m).  
\]
\par 
Therefore, for every fixed $n\in \N$, we have a  decreasing sequence 
of admissible subgroups $(\pi_{m,n}(U_m))_{m \geq n}$ of $A^{E_n M}$.
\par 
Since $A$ is an admissible Artinian group structure,  $(\pi_{nm}(U_m))_{m \geq n}$ must stabilize. It follows that we can choose the smallest $r(n) \in \N$  depending on $n$ such that $r(n) \geq n$ and 
$\pi_{m,n}(U_m)= \pi_{r(n),n}(U_{r(n)})$ for all $m \geq r(n)$. 
\par 
For every $n \in \N$, let us denote 
\begin{equation}
\label{e:w-n-u-n-sigma} 
W_n \coloneqq  \pi_{r(n),n}(U_{r(n)}) \subset U_n \subset \Sigma_{E_nM}.
\end{equation} 
\par
Observe from our constructions that  for all $m \geq n \geq 0$,  the projection $\pi_{m,n} \colon A^{E_m M} \to A^{E_n M}$ induces by restriction a well-defined group homomorphism  
$p_{m,n} \colon W_m \to W_n$. Indeed, if $x \in W_m$ then note that $x\in  \pi_{k,m}(U_k)$ for $k = \max (r(m), r(n))$. Hence, we find that 
\[
\pi_{m,n}(x) \in \pi_{m,n}(\pi_{k,m}(U_k)) = \pi_{k,n}(U_k)= W_n. 
\]
\par 
\underline{\textbf{Claim}}:  $p_{m,n} \colon W_m  \to W_n$ is surjective for all $m\geq n \geq 0$. 
\par 
Indeed, let us fix $m \geq n \geq 0$ and  $x\in W_n$.  Since $W_n=\tau_{k,n}(U_k)$ for $k= \max (r(m), r(n))$, there exists $y\in U_k$ such that $p_{k,n}(y)=x$. If  we define $z = p_{k,m}(y) \in W_m$ then we find that 
\[
p_{m,n}(z) = p_{m,n}(p_{k,m}(y)) = p_{k,n}(y)= x.
\]
\par 
Hence, $W_n \subset p_{m,n}(W_n)$ and the claim is proved. 
\par 
We construct a sequence $(u_n)_{n \in \N}$ with   
$u_n \in W_n$ and $p_{n+1,n}(u_{n+1})=u_n$ for all $n\in \N$  as follows. 
First, we define $u_0 \coloneqq \pi_{r(0),0}(w_{r(0)}) \in W_0$. We infer from  \eqref{e:tau-e-n-1} that $u_0(1_G) \neq e$. 
 \par 
 Suppose that we have constructed $u_n \in W_n$  for some $n\in \N$. 
Since $p_{n+1,n}(W_{n+1})=W_n$, we can find and fix $u_{n+1} \in W_{n+1}$ such that 
\[
\pi_{n+1,n}(u_{n+1})=p_{n+1,n}(u_{n+1})=u_n.
\]
\par
Hence, we obtain by induction the sequence $(u_n)_{n \in \N}$ with   
$u_n \in W_n$ and $u_{n+1}\vert_{E_nM}=u_n$ for all $n\in \N$. Therefore, we 
can define $u \in A^G$ by setting $u\vert_{E_nM}= u_n \in W_n$ for every $n \in \N$. 
\par 
By construction, note that $u(1_G)= u_0(1_G) \neq 0$. Moreover, since we have $W_n \subset \Sigma_{M E_n}$ (see \eqref{e:w-n-u-n-sigma}) and $G= \cup_{n \in \N} E_n = \cup_{n \in \N}M E_n$ as $1_G \in M$, we deduce that $u$ belongs to the closure of $\Sigma$ in $A^G$ with respect to the prodiscrete topology. As $\Sigma$ is closed in $A^G$, it follows that $u \in \Sigma$. 
\par 
On the other hand, we infer from the relation $W_n \subset U_n= \Ker (\tau_{E_n}^+)$ that:
\[
\tau(u) \vert_{E_n} = \tau_{E_n}^+(u\vert_{M E_n})= \tau_{E_n}^+ (u_n) = e^{E_n}. 
\]
\par 
Therefore, $\tau(u)=e^G$ as $G=  \cup_{n \in \N}M E_n$. But $\tau(e^G)=e^g$ while $u \neq e^G$ since $u(1_G) \neq e$, we obtain a contradiction to the injectivity of $\tau$. This proves the existence of a finite subset $N \subset G$ satisfying $(P)$. The proof is thus complete. 
\end{proof}
\par 
We can now prove the following main result of the section. 

\begin{theorem}
\label{l:techno-left-inverse}
Let $G$ be a countable monoid. 
Let $A$ be an admissible Artinian group structure and let $\Sigma\subset A^G$ be a closed admissible group subshift. 
Let $\tau \colon \Sigma \to A^G$ be an injective admissible group cellular automaton. 
Let $\Gamma= \tau(\Sigma)$ then there exists a finite subset $N\subset G$ such that: 
\begin{enumerate}
\item[(K)]
for every finite subset $E \subset G$ containing $N$, 
there exists a group homomorphism  
$\eta_E \colon  \Gamma_E \to A$ 
such that for every $x\in \Sigma$:  
\begin{equation}
\label{e:left-inverse-local}
x(1_G)=\eta_E(\tau(x)\vert_E). 
\end{equation}
\end{enumerate}
\end{theorem}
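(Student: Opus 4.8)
The plan is to deduce Theorem~\ref{l:techno-left-inverse} from Lemma~\ref{l:techincal-left-inverse} in essentially the same way that Theorem~\ref{t:left-reverse-ca-alg} was deduced from Lemma~\ref{l:symbol-var-left-inverse}, the only extra point being that the left inverse must now be shown to be an \emph{admissible} group homomorphism rather than merely a map. First I would invoke Lemma~\ref{l:techincal-left-inverse} to obtain a finite subset $N \subset G$ such that, for every $x \in \tau(A^G)$ (and in particular for every $x \in \Gamma = \tau(\Sigma) \subset \tau(A^G)$, using that $\tau$ extends to the full shift or applying the lemma directly to $\Sigma$), the value $\tau^{-1}(x)(1_G)$ depends only on $x\vert_N$. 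Enlarging $N$ if necessary we may assume $M \subset N$, where $M$ is a memory set of $\tau$ with $1_G \in M$.

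Next, fix a finite subset $E \subset G$ with $N \subset E$. Property~(P) guarantees that for $x, y \in \Gamma$ with $x\vert_E = y\vert_E$ we have $x\vert_N = y\vert_N$ and hence $\tau^{-1}(x)(1_G) = \tau^{-1}(y)(1_G)$; this produces a well-defined map $\eta_E \colon \Gamma_E \to A$ sending $x\vert_E \mapsto \tau^{-1}(x)(1_G)$ for $x \in \Gamma$. The identity \eqref{e:left-inverse-local} is then immediate: for $x \in \Sigma$, taking the configuration $\tau(x) \in \Gamma$ we get $\eta_E(\tau(x)\vert_E) = \tau^{-1}(\tau(x))(1_G) = x(1_G)$, using injectivity of $\tau$ so that $\tau^{-1}(\tau(x)) = x$.

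The main point to check — and the step I expect to require the most care — is that $\eta_E$ is a \emph{group homomorphism}, indeed an admissible one. For the homomorphism property I would argue as follows: $\Gamma_E$ is an admissible subgroup of $A^E$ by Theorem~\ref{t:produce-admissible-grp-shift}(ii) (since $\Gamma = \tau(\Sigma)$ is an admissible group subshift), and the restriction map $\Gamma \to \Gamma_E$ together with $\tau^{-1} \colon \Gamma \to \Sigma$ and evaluation at $1_G$ are all group homomorphisms; the content of Property~(P) is precisely that the composite $\Gamma \to A$, $x \mapsto \tau^{-1}(x)(1_G)$, factors through the restriction $\Gamma \to \Gamma_E$, and a set-map through which a group homomorphism factors along a surjective homomorphism is automatically a homomorphism. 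Since $\tau^{-1}$ is the inverse of an injective admissible group homomorphism, it is itself admissible (its graph is the graph of $\tau$ with the factors swapped, which is admissible); composing with the admissible projections $A^E \to A^{\{1_G\}} = A$ and restricting to $\Gamma_E$ shows $\eta_E$ is admissible. This completes the proof.

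Alternatively, and perhaps more cleanly, one can avoid discussing admissibility of $\tau^{-1}$ directly: realize $\eta_E$ as an induced map. By Lemma~\ref{l:tau-E-admissible} the induced map $\tau_E^+ \colon \Sigma_{ME} \to A^E$ is an admissible homomorphism; Property~(P) says the composite $\Sigma_{ME} \to A$, $c \mapsto (\text{any lift to }\Sigma)(1_G)$, factors through $\tau_E^+$ (followed by restriction to $\Gamma_E$, which is the image), and the resulting factorization $\Gamma_E \to A$ — which one identifies with $\eta_E$ — inherits admissibility from the fact that the evaluation $\Sigma_{ME} \to A$ at $1_G$ and $\tau_E^+$ are admissible and $\tau_E^+$ is surjective onto $\Gamma_E$ (admissibility of a homomorphism through which an admissible homomorphism factors along a surjective admissible homomorphism follows from the stability properties of admissible subgroups in Definition~\ref{d:admissible-homomorphism} applied to graphs). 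Either route works; I would write up whichever keeps the bookkeeping of graphs shortest.
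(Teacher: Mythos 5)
Your proposal is correct and takes essentially the same route as the paper: invoke Lemma~\ref{l:techincal-left-inverse} to obtain $N$, define $\eta_E$ on $\Gamma_E$ by lifting to $\Gamma$ and evaluating $\tau^{-1}$ at $1_G$, and verify the identity \eqref{e:left-inverse-local}. The only difference is that the paper dismisses the homomorphism property with ``clearly a group homomorphism since $\tau$ is an injective group homomorphism,'' whereas you spell out the factorization argument; your additional discussion of admissibility of $\eta_E$ is not required by the statement and can be omitted.
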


\begin{proof}
We infer from Lemma~\ref{l:techincal-left-inverse} that there exists a finite subset $N \subset G$ such that for every $y \in \Gamma$, the element  $\tau^{-1}(y)(1_G)\in A$ depends only on the restriction $y \vert_N$.
Consequently, we have the following  well-defined map for every finite subset $E \subset G$ such that $N \subset E$: 
\begin{equation}
    \label{e:left-inverse-thm-1} 
    \eta_E \colon \Gamma_E \to A,\quad 
    y  \mapsto \tau^{-1}(z)(1_G),
\end{equation}
where $z \in \Gamma$ is any configuration extending $y \in \Gamma_E$. Now for $x \in \Sigma$, let $y= \tau(x)\vert_E$ then we deduce from \eqref{e:left-inverse-thm-1} that: 
\begin{equation*}
    \eta_E(\tau(x)\vert_E)= \eta_E(y)=  \tau^{-1}(\tau(x))(1_G)=x(1_G).
\end{equation*} 
\par 
Hence, $\eta_E$ satisfies the relation \eqref{e:left-inverse-local}. 
Observe that $\eta$ is clearly a group homomorphism since $\tau$ is an injective group homomorphism. The proof is thus complete. 
\end{proof} 

\section{Images of injective admissible group cellular automata}
\label{s:main-2-intro-admissible-general} 

The first goal of the present section is to give a proof of the following result which says that the image of every subshift of finite type under an injective admissible group cellular automata must be a subshift of finite type. 

\begin{theorem}
\label{t:main-2-intro-admissible}
Let $G$ be a countable monoid. 
Let $A$ be an admissible Artinian group structure and let $\Sigma\subset A^G$ be a closed admissible group subshift.  
Let $\tau \colon \Sigma \to A^G$ be an injective admissible group cellular automaton. Suppose that $\Delta \subset A^G$ is a subshift of finite type such that $\Delta \subset \Sigma$. Then $\tau(\Delta)$ is a subshift of finite type. 
\end{theorem}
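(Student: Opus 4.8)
The plan is to mimic the strategy of the proof of Theorem~\ref{t:sft-image}, replacing the algebraic ingredients by their admissible-group counterparts. The key technical input is now Theorem~\ref{l:techno-left-inverse} (left reversibility of injective admissible group cellular automata), which plays exactly the role that Theorem~\ref{t:left-reverse-ca-alg} played in the algebraic setting, and Theorem~\ref{t:produce-admissible-grp-shift}, which guarantees that the various auxiliary subshifts we build are themselves admissible group subshifts so that the relevant local maps are admissible homomorphisms.

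First I would fix an admissible local defining map $\mu\colon\Sigma_M\to A$ of $\tau$ with $1_G\in M$, and let $N\subset G$ be the finite set provided by Theorem~\ref{l:techno-left-inverse}, giving for each finite $E\supset N$ a group homomorphism $\eta_E\colon\Gamma_E\to A$ with $\eta_E(\tau(x)\vert_E)=x(1_G)$ for all $x\in\Sigma$, where $\Gamma=\tau(\Sigma)$. Since $\Delta$ is of finite type it has a defining window $D$; enlarging $M$, $N$, and $D$, I may assume $D\subset M=N$, so by Lemma~\ref{l:window-change-sft} we have $\Delta=\Sigma(A^G;M,\Delta_M)$. By Theorem~\ref{t:produce-admissible-grp-shift}(i) the subshift $\Lambda\coloneqq\Sigma(A^G;M^2,\Gamma_{M^2})$ is an admissible group subshift, and since $1_G\in M$ gives $M\subset M^2$, Lemma~\ref{l:restriction-subshift-equality} yields $\Lambda_M=\Gamma_M$. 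Hence $\eta_M\colon\Gamma_M=\Lambda_M\to A$, which is an admissible homomorphism, determines an admissible group cellular automaton $\sigma\colon\Lambda\to A^G$ with memory set $M$.

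Next I would check that $\sigma\circ\tau=\Id_\Sigma$. Using $G$-equivariance of $\tau$ and $\sigma$ together with the relation $\eta_M(\tau(x')\vert_M)=x'(1_G)$ applied to $x'=g\star x$, one computes $\sigma(\tau(x))(g)=\eta_M((g\star\tau(x))\vert_M)=\eta_M(\tau(g\star x)\vert_M)=(g\star x)(1_G)=x(g)$ for all $g\in G$, so $\sigma(\tau(x))=x$; in particular $\sigma\circ\tau\vert_\Delta$ is the identity on $\Delta$. Now all the hypotheses of Theorem~\ref{t:criterion-finite-type} are met for the pair $\tau\vert_\Delta\colon\Delta\to A^G$, $\sigma\vert_{\tau(\Delta)}\colon\tau(\Delta)\to A^G$, with common memory set $M\ni 1_G$: indeed $\tau(\Delta)$ is the image of $\Delta$ under $\tau\vert_\Delta$, the composite is the identity on $\Delta$, and $\Delta=\Sigma(A^G;M,\Delta_M)$. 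Theorem~\ref{t:criterion-finite-type} then gives $\tau(\Delta)=\Sigma(A^G;M^2,\tau(\Delta)_{M^2})$, so $\tau(\Delta)$ is a subshift of finite type, completing the proof.

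I expect the only real subtlety to be bookkeeping: making sure that after all the enlargements of $M$, $N$, and $D$ one still has a single common memory set containing $1_G$ for both $\tau$ and $\sigma$, that $\Lambda$ and the cellular automaton $\sigma$ are well-defined on it (which is where $\Lambda_M=\Gamma_M$ via Lemma~\ref{l:restriction-subshift-equality} is used), and that $\sigma$ is genuinely an admissible group cellular automaton (which is where Theorem~\ref{t:produce-admissible-grp-shift}(i) and Lemma~\ref{l:tau-E-admissible} enter). These are exactly the points that were handled in the algebraic case, so no essentially new difficulty should arise; the admissible-group framework is engineered precisely so that Theorem~\ref{t:criterion-finite-type}, which is purely combinatorial, can be invoked verbatim.
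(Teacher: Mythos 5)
Your proposal is correct and follows essentially the same route as the paper: obtain the left inverse data from Theorem~\ref{l:techno-left-inverse}, enlarge $M$, $N$, $D$ so that $D\subset M=N$ and $\Delta=\Sigma(A^G;M,\Delta_M)$, build $\sigma$ on $\Lambda=\Sigma(A^G;M^2,\Gamma_{M^2})$ from $\eta_M$ using $\Lambda_M=\Gamma_M$, verify $\sigma\circ\tau=\Id_\Sigma$, and invoke Theorem~\ref{t:criterion-finite-type}. The only cosmetic difference is that you additionally cite Theorem~\ref{t:produce-admissible-grp-shift}(i) for the admissibility of $\Lambda$, which is harmless but not needed since Theorem~\ref{t:criterion-finite-type} only requires $\sigma$ to be a cellular automaton.
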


\begin{proof}
Let us denote $\Gamma= \tau(\Sigma)$ and $X= \tau (\Delta)$. Let $\mu \colon \Sigma_M \to A$ be a local defining map of $\tau$ associated with a finite memory set $M\subset G$ such that $1_G \in M$. 
Let $D\subset G$ be a defining window of $\Delta$  so that 
$\Delta= \Sigma(A^G; D, \Delta_D)$. 
\par 
By Theorem~\ref{l:techno-left-inverse}, we can find a finite subset $N \subset G$ such that 
for every finite subset $E \subset G$ containing $N$, 
we have a group homomorphism  
$\eta_E \colon  \Gamma_E \to A$
such that for every $x\in \Sigma$:  
\begin{equation}
\label{e:left-inverse-local-1-2}
x(1_G)=\eta_E(\tau(x)\vert_E).
\end{equation} 
\par 
Therefore, together with  Lemma~\ref{l:tau-E-admissible}, we can replace without loss of generality $M$ and $N$ by $(M \cup N)D$. In particular, $D \subset M=N$ since $1_G \in M$ so that we have 
$\Delta = \Sigma(A^G; M, \Delta_M)$ by Lemma~\ref{l:window-change-sft}.  
\par 
Let us denote $\Lambda \coloneqq \Sigma(A^G; M^2, \Gamma_{M^2})$ and $Y= \Sigma(A^G; M^2, X_{M^2})$.  
Consider the cellular automaton $\sigma \colon \Lambda \to A^G$ admitting $\eta_M\colon \Gamma_M \to A$ as a local defining map (note that $\Gamma_M=\Lambda_M$ as $M \subset M^2$).  . We deduce from the relation  \eqref{e:left-inverse-local-1-2} and the $G$-equivariance of $\tau$ and $\sigma$ that for every $x \in \Sigma$ and every $g \in G$, we have: 
\begin{align} 
\label{e:main-ad-grp-proof-1}
    \sigma(\tau(x))(g)
    & = \eta_M((g \star \tau(x))\vert_M) \nonumber \\
    & = \eta_M(\tau(g \star x)\vert_M)\nonumber \\
    & = (g \star x)(1_G) \nonumber \\
    &= x(g). \numberthis 
\end{align} 
\par 
Consequently, 
$\sigma \circ \tau  \colon \Sigma \to \Sigma$ is the identity map of $\Sigma$. In particular, the restriction $\sigma \circ \tau\vert_\Delta$ is the identity map of $\Delta$. 
We can thus conclude from Theorem~\ref{t:criterion-finite-type}    that 
\[
X = Y= \Sigma(A^G; M^2, X_{M^2})
\]
is a subshift of finite type of $A^G$. The proof is thus complete. 
\end{proof}

\par 
As an immediate application of Theorem~\ref{t:main-2-intro-admissible}, we obtain: 

\begin{corollary}
\label{c:image-sft-again-sft}
Let $G$ be a countable monoid and let $A$ be an admissible Artinian group structure. Let  $\Sigma\subset A^G$ be an admissible group subshift of finite type. Then for every  
 injective admissible group cellular automaton $\tau \colon \Sigma \to A^G$, the image  
$\tau(\Sigma)$ is an admissible group subshift of finite type.
\end{corollary}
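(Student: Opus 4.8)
The plan is to deduce the corollary directly from Theorem~\ref{t:main-2-intro-admissible}, supplemented by the fact that the induced local maps of $\tau$ are admissible homomorphisms.

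First I would record that $\Sigma$, being a subshift of finite type, is closed with respect to the prodiscrete topology (this is the closedness property recalled right after \eqref{e:sft}). Consequently $\Sigma$ is a \emph{closed} admissible group subshift, and $\tau\colon\Sigma\to A^G$ is an injective admissible group cellular automaton, so the hypotheses of Theorem~\ref{t:main-2-intro-admissible} are met. Applying that theorem with $\Delta=\Sigma$ --- which is permitted because, by the hypothesis of the corollary, $\Sigma$ is itself a subshift of finite type contained in $\Sigma$ --- we conclude at once that $\tau(\Sigma)$ is a subshift of finite type of $A^G$.

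It then remains to verify that $\tau(\Sigma)$ is moreover an admissible group subshift, i.e., that $\tau(\Sigma)_E$ is an admissible subgroup of $A^E$ for every finite subset $E\subset G$. I would fix a memory set $M\subset G$ of $\tau$ together with its associated admissible local defining map $\mu\colon\Sigma_M\to A$. For a finite $E\subset G$ the induced map $\tau_E^+\colon\Sigma_{ME}\to A^E$ satisfies $\tau_E^+(x\vert_{ME})=\tau(x)\vert_E$ for all $x\in\Sigma$, whence $\tau(\Sigma)_E=\tau_E^+(\Sigma_{ME})$. By Lemma~\ref{l:tau-E-admissible} the map $\tau_E^+$ is an admissible homomorphism, while $\Sigma_{ME}$ is an admissible subgroup of $A^{ME}$ since $\Sigma$ is an admissible group subshift; as the image of an admissible subgroup under an admissible homomorphism is again an admissible subgroup (cf.\ the remarks following Definition~\ref{d:admissible-homomorphism}), it follows that $\tau(\Sigma)_E$ is an admissible subgroup of $A^E$. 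Combining this with the previous step shows that $\tau(\Sigma)$ is an admissible group subshift of finite type, as desired.

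I do not anticipate any real obstacle: the substance of the statement is entirely contained in Theorem~\ref{t:main-2-intro-admissible}, and the only subtlety worth flagging is that one must invoke the closedness of subshifts of finite type in order to legitimately apply that theorem with $\Delta=\Sigma$; the extra bookkeeping needed to see that admissibility of all restrictions is preserved is immediate from Lemma~\ref{l:tau-E-admissible}.
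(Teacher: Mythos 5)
Your proof is correct and follows essentially the same route as the paper: apply Theorem~\ref{t:main-2-intro-admissible} with $\Delta=\Sigma$ to get that $\tau(\Sigma)$ is of finite type, then check that all restrictions $\tau(\Sigma)_E$ are admissible subgroups. The only (harmless) divergence is that the paper cites Theorem~\ref{t:produce-admissible-grp-shift} for the admissibility of $\tau(\Sigma)$, whereas you unpack that step directly via $\tau(\Sigma)_E=\tau_E^+(\Sigma_{ME})$ and Lemma~\ref{l:tau-E-admissible}; your explicit remark that a subshift of finite type is closed (so that the theorem's hypotheses are met) is a useful detail the paper leaves implicit.
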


\begin{proof}
It follows from Theorem~\ref{t:main-2-intro-admissible} that $\tau(\Sigma)$ is a subshift of finite type. Since $\Sigma$ is an admissible group subshift, we infer from Theorem~\ref{t:produce-admissible-grp-shift} that $\tau(\Sigma)$ is an admissible group subshift of $A^G$. The conclusion thus follows. 
\end{proof}

It turns out that the converse of Theorem~\ref{t:main-2-intro-admissible} also holds as follows. The proof is similar to that of Theorem~\ref{t:converse-sft-image}. 

\begin{theorem}
\label{t:main-2-intro-admissible-converse} 
Let $G$ be a countable monoid. 
Let $A$ be an admissible Artinian group structure and let $\Sigma\subset A^G$ be a closed admissible group subshift.  
Let $\tau \colon \Sigma \to A^G$ be an injective admissible group cellular automaton. Suppose that $\Delta \subset A^G$ is a subshift  such that $\Delta \subset \Sigma$ and $\tau(\Delta)$ is a subshift of finite type. Then $\Delta$ is also a subshift of finite type.  
\end{theorem}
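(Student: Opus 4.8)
The plan is to mirror the argument of Theorem~\ref{t:converse-sft-image}, but now using the left-reversibility statement Theorem~\ref{l:techno-left-inverse} in place of Theorem~\ref{t:left-reverse-ca-alg}, and the module/admissible-homomorphism bookkeeping of Section~\ref{s:admissible-subshift} in place of the algebraic-variety bookkeeping. First I would introduce the ambient data: set $\Gamma = \tau(\Sigma)$, fix a local defining map $\mu\colon \Sigma_M \to A$ of $\tau$ with $1_G \in M$, and let $F \subset G$ be a defining window for the subshift of finite type $\tau(\Delta)$, so that $\tau(\Delta) = \Sigma(A^G; F, \tau(\Delta)_F)$. Apply Theorem~\ref{l:techno-left-inverse} to obtain a finite $N \subset G$ and, for every finite $E \supset N$, a group homomorphism $\eta_E\colon \Gamma_E \to A$ with $\eta_E(\tau(x)\vert_E) = x(1_G)$ for all $x \in \Sigma$. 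Using Lemma~\ref{l:tau-E-admissible} and Lemma~\ref{l:window-change-sft}, enlarge $M$ and $N$ so that $M = N$ and $F \subset M$; then by Lemma~\ref{l:window-change-sft} we may also rewrite $\tau(\Delta) = \Sigma(A^G; M, \tau(\Delta)_M)$.

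Next I would set up the two cellular automata needed to invoke Theorem~\ref{t:criterion-finite-type}, but with the roles of the two spaces played on the same alphabet $A$. Define $Z \coloneqq \Sigma(A^G; M^2, \Delta_{M^2}) \subset A^G$, so that $\Delta \subset Z$, and by Lemma~\ref{l:restriction-subshift-equality} we have $Z_M = \Delta_M \subset \Sigma_M$ (since $M \subset M^2$), so $\mu$ restricts to a local defining map $\mu\vert_{Z_M}$ and determines a cellular automaton $\pi\colon Z \to A^G$ with $\pi\vert_\Delta = \tau\vert_\Delta$. Similarly set $\Lambda \coloneqq \Sigma(A^G; M^2, \Gamma_{M^2})$; since $\Gamma_M = \Lambda_M$ by Lemma~\ref{l:restriction-subshift-equality}, the homomorphism $\eta_M\colon \Gamma_M \to A$ determines a cellular automaton $\sigma\colon \Lambda \to A^G$. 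The computation \eqref{e:main-ad-grp-proof-1} from the proof of Theorem~\ref{t:main-2-intro-admissible}, using $G$-equivariance of $\tau$ and $\sigma$ together with $\eta_M(\tau(x)\vert_M) = x(1_G)$, shows $\sigma \circ \tau = \Id_\Sigma$. Hence $\sigma(\tau(\Delta)) = \Delta$ and, since $\tau(\Delta)$ is the full image of $\Delta$ under $\pi$ (as $\pi\vert_\Delta = \tau\vert_\Delta$) and $\Delta \subset \Sigma$, the composite $\pi \circ \sigma\vert_{\tau(\Delta)} = \tau \circ \sigma\vert_{\tau(\Delta)}$ acts as the identity on $\tau(\Delta)$.

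Finally I would apply Theorem~\ref{t:criterion-finite-type} with the ``$\Sigma$'' of that theorem taken to be the subshift of finite type $\tau(\Delta)$, the ``$\Gamma$'' taken to be $\Delta$, the cellular automaton ``$\tau$'' taken to be $\sigma\vert_{\tau(\Delta)}\colon \tau(\Delta) \to A^G$, and the cellular automaton ``$\sigma$'' taken to be $\pi\vert_\Delta \colon \Delta \to A^G$ (one checks $\tau(\Delta) = \sigma(\tau(\Delta))$'s image condition and $\pi\circ\sigma = \Id$ on $\tau(\Delta)$, plus $\tau(\Delta) = \Sigma(A^G; M, \tau(\Delta)_M)$, are exactly the hypotheses). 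The conclusion gives $\Delta = \Sigma(A^G; M^2, \Delta_{M^2}) = Z$, so $\Delta$ is of finite type. The main thing to be careful about — the only real obstacle — is that Theorem~\ref{t:criterion-finite-type} is not symmetric in its two arguments, so I must verify that the ``image equals the target subshift'' hypothesis and the ``$1_G \in M$ / common memory set'' hypotheses hold after the roles are swapped; all of this is forced once $F \subset M = N$ and $1_G \in M$, and the image condition $\sigma(\tau(\Delta)) = \Delta$ is precisely what $\sigma \circ \tau = \Id_\Sigma$ delivers. A secondary point is to confirm that $\pi$ and $\sigma$ are genuinely well-defined cellular automata on $Z$ and $\Lambda$ respectively, which is where the restriction identities $Z_M = \Delta_M$ and $\Lambda_M = \Gamma_M$ from Lemma~\ref{l:restriction-subshift-equality} are used; no admissibility of $\Delta$ is needed for the finite-type conclusion, so the soficity half (if desired) would follow verbatim as in Theorem~\ref{t:converse-sft-image}(ii).
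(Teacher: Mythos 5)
Your proposal is correct and follows essentially the same route as the paper's own proof: invoke Theorem~\ref{l:techno-left-inverse} to build the left inverse $\sigma$, enlarge the memory set so the defining window of $\tau(\Delta)$ is absorbed, construct $\pi$ on $Z=\Sigma(A^G;M^2,\Delta_{M^2})$ and $\sigma$ on $\Lambda=\Sigma(A^G;M^2,\Gamma_{M^2})$, and apply Theorem~\ref{t:criterion-finite-type} with the roles of the two cellular automata swapped so that the known subshift of finite type $\tau(\Delta)$ sits in the source position. Your identification of the restriction identity as $Z_M=\Delta_M\subset\Sigma_M$ (rather than $Z_M=\Sigma_M$ as written in the paper) is in fact the accurate statement of what Lemma~\ref{l:restriction-subshift-equality} yields here.
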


\begin{proof}
We will proceed with several similar constructions and notation as in Theorem~\ref{t:main-2-intro-admissible}. 
Hence, we denote $\Gamma= \tau(\Sigma)$ and $X= \tau (\Delta)$ and note that $X \subset \Gamma$ since $\Delta \subset \Sigma$ by hypothesis. 
\par 
Since $X$ is a subshift of finite type by hypothesis, we can choose a finite defining window $D\subset G$ of $X$ so that  
$X= \Sigma(A^G; D, X_D)$. 
Let $\mu_M \colon \Sigma_M \to A$ be a local defining map of $\tau$ associated with a finite memory set $M\subset G$ such that $1_G \in M$. 
\par 
By Theorem~\ref{l:techno-left-inverse}, we can find a finite subset $N \subset G$ such that 
for every finite subset $E \subset G$ containing $N$, 
we have a group homomorphism  
$\eta_E \colon  \Gamma_E \to A$
such that for every $x\in \Sigma$, we have  
$x(1_G)=\eta_E(\tau(x)\vert_E)$. 
\par 
By replacing $M$ and $N$ by $(M \cup N)D$,  we can suppose without loss of generality that $D \subset M=N$. Hence, by  Lemma~\ref{l:window-change-sft}, we can write: 
\begin{equation}
\label{e:main-ad-group-sigma-fsft-converse-1}
      X= \Sigma(A^G; M, X_M). 
\end{equation}
\par 
Let us define $Z\coloneqq \Sigma(A^G; M^2, \Delta_{M^2}) \subset A^G$ then  $\Delta \subset Z$. Moreover, we infer from Lemma~\ref{l:restriction-subshift-equality} that $Z_M = \Sigma_M$ since $M \subset M^2$ as $1_G \in M$. 
\par 
Therefore, the local defining map $\mu_M$ determines a cellular automaton 
$\pi \colon Z \to A^G$ whose restriction to $\Delta$ coincides   with $\tau$, that is, $\pi\vert_\Delta= \tau$.   
\par 
Denote $\Lambda \coloneqq \Sigma(A^G; M^2, \Gamma_{M^2}) \subset A^G$ and consider the cellular automaton $\sigma \colon \Lambda \to A^G$ admitting $\eta_M\colon \Gamma_M \to A$ as a local defining map (note that $\Gamma_M=\Lambda_M$ by Lemma~\ref{l:restriction-subshift-equality} as $M \subset M^2$).  
\par 
Since 
$\sigma \circ \tau$ is the identity map on $\Sigma$ as we have seen in the proof of Theorem~\ref{t:main-2-intro-admissible}, we have $\sigma(X)=\sigma(\tau(\Delta))= \Delta$. On the other hand, since $\tau(\Delta)= X$ and $\Delta \subset \Sigma$, 
we deduce immediately that the restriction 
$\pi \circ \sigma \vert_X= \tau \circ \sigma \vert_X$ acts as the identity map on $X$.    
\par 
Hence, it follows from Theorem~\ref{t:criterion-finite-type} applied to  the subshift of finite type $X$ and the cellular automata $\sigma\vert_X \colon X \to A^G$, $\tau\vert_\Delta \colon \Delta \to A^G$ that 
\[ 
\Delta = Z= \Sigma(A^G; M^2, \Delta_{M^2})
\] 
so $\Delta$ is a subshift of finite type. The proof is thus complete. 
\end{proof}
\par
In parallel to Theorem~\ref{t:converse-sft-image},  we can now establish the following general result from which we deduce easily  Theorem~\ref{t:main-corollary-admissible-intro} in the Introduction.  
\begin{theorem}
\label{t:main-2-intro-admissible-general} 
Let $G$ be a countable monoid. 
Let $A$ be an admissible Artinian group structure and let $\Sigma\subset A^G$ be a closed admissible group subshift. Let $\Delta \subset A^G$ be a subshift such that $\Delta \subset \Sigma$. Suppose that $\tau \colon \Sigma \to A^G$ is an injective admissible group cellular automaton. Then the following hold: 
\begin{enumerate} [\rm (i)] 
    \item $\Delta$ is a subshift of finite type if and only if  so is  $\tau(\Delta)$; 
    \item 
    $\Delta$ is a sofic subshift if and only if so is  $\tau(\Delta)$.   
\end{enumerate}
\end{theorem}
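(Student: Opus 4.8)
The plan is to deduce the statement from results already in hand, with essentially no new computation. For point~(i), the forward implication ``$\Delta$ of finite type $\Rightarrow \tau(\Delta)$ of finite type'' is precisely Theorem~\ref{t:main-2-intro-admissible}, and the converse ``$\tau(\Delta)$ of finite type $\Rightarrow \Delta$ of finite type'' is precisely Theorem~\ref{t:main-2-intro-admissible-converse}; so (i) requires nothing further.

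For point~(ii) I would first reconstruct the left inverse of $\tau$. Fix a memory set $M \subset G$ of $\tau$ with $1_G \in M$ and put $\Gamma \coloneqq \tau(\Sigma)$. Let $N$ be the finite subset furnished by Theorem~\ref{l:techno-left-inverse}; enlarging $M$ if necessary (legitimate by Lemma~\ref{l:tau-E-admissible}) I may assume $N = M$, so that there is a group homomorphism $\eta_M \colon \Gamma_M \to A$ with $\eta_M(\tau(x)\vert_M) = x(1_G)$ for all $x \in \Sigma$. Setting $\Lambda \coloneqq \Sigma(A^G; M^2, \Gamma_{M^2})$ and noting $\Gamma_M = \Lambda_M$ by Lemma~\ref{l:restriction-subshift-equality}, the map $\eta_M$ is the local defining map of an admissible group cellular automaton $\sigma \colon \Lambda \to A^G$; the same $G$-equivariance computation as in the proof of Theorem~\ref{t:main-2-intro-admissible} then gives $\sigma \circ \tau = \Id_\Sigma$. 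Since $\Delta \subset \Sigma$ this yields $\sigma(\tau(\Delta)) = \Delta$, and since $\tau(\Delta) \subset \tau(\Sigma) = \Gamma \subset \Lambda$, the automaton $\sigma$ may be applied to any subshift contained in $\tau(\Delta)$.

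With $\sigma$ available, (ii) is formal. If $\Delta$ is sofic, write $\Delta = \gamma(W)$ with $W$ a subshift of finite type and $\gamma$ a cellular automaton; since $\gamma(W) = \Delta \subset \Sigma$ the composite $\tau \circ \gamma \colon W \to A^G$ is defined, and a composition of cellular automata is again a cellular automaton, so $\tau(\Delta) = (\tau\circ\gamma)(W)$ is sofic. Conversely, if $\tau(\Delta)$ is sofic, write $\tau(\Delta) = \gamma'(W')$ with $W'$ of finite type and $\gamma'$ a cellular automaton; since $\gamma'(W') = \tau(\Delta) \subset \Lambda$ the composite $\sigma \circ \gamma' \colon W' \to A^G$ is a cellular automaton, and $(\sigma\circ\gamma')(W') = \sigma(\tau(\Delta)) = \Delta$, so $\Delta$ is sofic. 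I do not expect a genuine obstacle: all the force is in Theorem~\ref{l:techno-left-inverse} (left reversibility) together with Theorems~\ref{t:main-2-intro-admissible}--\ref{t:main-2-intro-admissible-converse}, and the only point requiring attention is the domain bookkeeping — checking that the images of the auxiliary cellular automata land in the domains $\Sigma$ and $\Lambda$ on which $\tau$ and $\sigma$ are defined — which is exactly what the inclusions $\gamma(W) = \Delta \subset \Sigma$ and $\gamma'(W') = \tau(\Delta) \subset \Gamma \subset \Lambda$, combined with $\sigma \circ \tau = \Id_\Sigma$, provide.
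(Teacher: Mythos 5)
Your proposal is correct and follows essentially the same route as the paper: point (i) by citing Theorems~\ref{t:main-2-intro-admissible} and~\ref{t:main-2-intro-admissible-converse}, and point (ii) by building the left inverse $\sigma$ with $\sigma\circ\tau=\Id_\Sigma$ from Theorem~\ref{l:techno-left-inverse} and composing cellular automata. Your explicit domain bookkeeping (defining $\sigma$ on $\Lambda\supset\Gamma$ and checking the images land where the compositions are defined) is slightly more careful than the paper's wording but does not change the argument.
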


\begin{proof}
The point (i) results directly from the combination of Theorem~\ref{t:main-2-intro-admissible} and  Theorem~\ref{t:main-2-intro-admissible-converse}. For (ii), suppose first that $\Delta$ is a sofic  subshift. Then $\Delta$ is the image of a subshift of finite type $X$ under a cellular automaton $\pi$. Since the composition of two cellular automata is also a cellular automaton, it follows that $\tau(\Delta)= \tau(\pi(X))$ is also a sofic subshift. 
\par 
Conversely, suppose that $ \tau(\Delta) \subset A^G$ is a sofic subshift. Hence, $\tau(\Delta)$ is the image of a subshift of finite type $Y$ under a cellular automaton $\pi$.  
Let $\Gamma= \tau(\Sigma) \subset A^G$ then as in the proof of Theorem~\ref{t:main-2-intro-admissible}, there exists a cellular automaton $\sigma \colon \Gamma \to A^G$ such that $\sigma (\Gamma)=\Sigma$ and that $\sigma \circ \tau$ is the identity map of $\Sigma$. Since $\Delta \subset \Sigma$, it is clear that \[
\Delta = \sigma (\tau(\Delta)) =
\sigma (\pi(Y)) 
\] 
and we can again conclude that $\Delta$ is a sofic subshift. 
The proof is thus complete. 
\end{proof}

\section{Proof of Theorem ~\ref{t:main-corollary-admissible-intro}}
\label{s:proof-thm-b}
We can now deduce  Theorem~\ref{t:main-corollary-admissible-intro} from Theorem~\ref{t:main-2-intro-admissible-general} using a general reduction step to the case of one alphabet as follows. 

\begin{proof}[Proof of Theorem~\ref{t:main-corollary-admissible-intro}]  
Let $M\subset G$ be a finite memory set of $\tau$ such that $1_G \in M$ and let $\mu \colon \Sigma_M \to B$ be the corresponding local defining map which is also a module homomorphism. 
\par 
Let $\Delta \subset A^G$ be a subshift contained in $\Sigma$. Since $A$ and $B$ are Artinian modules over a ring that we denote by $R$, the direct sum $S=A\oplus B$ is also an Artinian $R$-module. 
\par 
For every subset $E \subset G$, we denote by $\pi_E \colon S^E \to A^E$ the canonical projection. Let us consider the following subshifts $\Delta(S) \subset \Sigma(S)$ of $S^G$: 
\[
\Delta(S)\coloneqq \pi_G^{-1}(\Delta), \quad   \Sigma(S) \coloneqq \pi_G^{-1}(\Sigma).
\]
\par 
It is clear that $\Sigma(S)$ is also a closed subshift submodule of $S^G$ since $\Sigma$ is a closed subshift submodule of $A^G$. 
Moreover, we can verify without difficulty  that $\Delta$ is a  subshift of finite type, resp. a sofic subshift, if and only if so is the subshift $\Delta(S)$. Note also that $\Sigma(S)_M = \pi_M^{-1}(\Sigma_M)$. 
\par 
We now define an $R$-module morphism $\mu_S \colon \Sigma(S)_M \to S$ as follows. Let $s \in \Sigma(S)_M \subset S^M$, we define $x \in A^M$ and $y \in B^M$ by the direct sum  decomposition $s(g)=(x(g),y(g)) \in A \oplus B$ for all $g \in M$. Then we simply set $\mu_S(s)\coloneqq (\mu(x), y(1_G))$. Using this formula, it is not hard to check that $\mu_S$ is a morphism of $R$-modules. 
\par 
We denote by $\tau_S \colon \Sigma(S) \to S^G$ the cellular automaton which admits $\mu_S$ as a local defining map. Then $\tau$ is a  homomorphism of $R$-modules and we deduce immediately from the construction that for $s=(x,y) \in \Sigma(S)$ where $x \in \Sigma$ and $y \in B^G$, we have the following relation:   \begin{equation}
\label{e:tau-s-tau-reduction}
    \tau_S(s)=(\tau(x), y)
\end{equation} 
\par 
Consequently,   $\tau_S(\Delta(S))=  \pi_G^{-1}(\tau(\Delta))$ and it follows that $\tau(\Delta)$ is a subshift of finite type, resp. a sofic subshift, if and only if so is $\tau_S(\Delta(S))$. 
\par 
Since $\tau$ is injective, 
we infer from  \eqref{e:tau-s-tau-reduction} that $\tau_S$ is also injective. Therefore, with respect to the canonical admissible Artinian group structure of $S$ as an Artinian module, Theorem~\ref{t:main-2-intro-admissible-general} implies that the subshift $\tau_S(\Delta(S))\subset S^G$ is of finite type, resp. sofic, if and only if so is the subshift $\Delta(S)\subset S^G$. \par 
Hence, the above discussions show that $\tau(\Delta)\subset A^G$ is a subshift of finite type, resp. sofic, if and only if so is $\Delta$. The proof is thus complete. 
\end{proof}

\section{Application on injective morphisms of symbolic group varieties} 
\label{s:app-sym-group-var}
\par 
Given a monoid $G$ and  algebraic groups $X, Y$ over an algebraically closed field $k$ (cf.~\cite{milne}). Let $A=X(k)$ and $B=Y(k)$ then following \cite{phung-israel}, a subshift $\Sigma \subset A^G$ is called a closed \emph{algebraic group subshift} if it is closed and for every finite subset $E \subset G$, the restriction $\Sigma_E$ is an algebraic subgroup of $A^E$. Given a closed algebraic group subshift $\Sigma \subset A^G$, a cellular automaton $\tau \colon \Sigma \to B^G$ is called an \emph{algebraic group cellular automaton} if it admits a local defining map $\mu\colon \Sigma_M \to B$ for some finite memory $M \subset G$ such that $\mu$ is a $k$-homomorphism of algebraic groups. 
\par 
As an another direct  application of Theorem~\ref{t:main-2-intro-admissible-general}, we obtain the following extension of Theorem~\ref{t:sft-image-intro-main-alg} in the case of algebraic group alphabets over any algebraically closed field (not necessarily uncountable). 
\par 

\begin{theorem}
\label{t:sym-grp-var-inj}
Let $G$ be a countable monoid and let $X, Y$ be algebraic groups over an algebraically closed field $k$. Let $\Sigma \subset X(k)^G$ be a closed algebraic group subshift. Suppose that $\tau \colon \Sigma \to Y(k)^G$ is an injective algebraic group cellular automaton. Then for every subshift $\Delta \subset X(k)^G$ such that $\Delta \subset \Sigma$, the subshift  $\tau(\Delta) \subset Y(k)^G$ is of finite type, resp. a sofic subshift, if and only if so is $\Delta$.     
\end{theorem}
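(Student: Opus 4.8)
The plan is to reduce Theorem~\ref{t:sym-grp-var-inj} to Theorem~\ref{t:main-2-intro-admissible-general} by a two-step reduction: first pass from two alphabets $X(k), Y(k)$ to a single alphabet, then recognize that the resulting structure is an admissible Artinian group structure even when $k$ is not uncountable. The crucial observation is that algebraic groups carry a \emph{canonical} admissible Artinian group structure given by their algebraic subgroups (Example~\ref{ex:canonical-admissible}), and this is valid over \emph{any} algebraically closed field: the Noetherian/Artinian chain condition in Definition~\ref{d:general-artinian-structure}(4) holds because algebraic groups over a field are Noetherian schemes, so descending chains of closed algebraic subgroups of $V^n$ stabilize; the uncountability of $k$ played a role only in the proof of Theorem~\ref{t:left-reverse-ca-alg} (via the nonempty inverse limit of constructible sets, \cite[Lemma~B.2]{cscp-alg-ca}), whereas the admissible-group machinery of Section~\ref{s:techno-left-inverse} uses instead the Artinian chain condition on kernels and the closedness of $\Sigma$, which needs no cardinality hypothesis. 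By Example~\ref{r:ad-ca}, homomorphisms of algebraic groups are admissible homomorphisms, so an algebraic group cellular automaton is in particular an admissible group cellular automaton, and a closed algebraic group subshift is a closed admissible group subshift.

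First I would set $A = X(k)$, $B = Y(k)$, fix a finite memory set $M \ni 1_G$ of $\tau$ with local defining map $\mu \colon \Sigma_M \to B$ a $k$-homomorphism of algebraic groups. The product $S \coloneqq X \times_k Y$ is again an algebraic group over $k$, with $S(k) = A \times B$, and it carries its canonical admissible Artinian group structure. For a subset $E \subset G$ write $\pi_E \colon S^E \to A^E$ for the projection induced by $S \to X$; set $\Sigma(S) \coloneqq \pi_G^{-1}(\Sigma)$ and, for $\Delta \subset \Sigma$, $\Delta(S) \coloneqq \pi_G^{-1}(\Delta)$. Then $\Sigma(S)$ is a closed admissible group subshift of $S^G$ (its restrictions $\Sigma(S)_E = \pi_E^{-1}(\Sigma_E)$ are preimages of algebraic subgroups under an admissible projection, hence algebraic subgroups), and $\Delta$ is of finite type, resp. sofic, if and only if $\Delta(S)$ is — exactly as in the proof of Theorem~\ref{t:main-corollary-admissible-intro}, since $\pi_G$ is a "coordinate-forgetting" map compatible with the shift and with defining windows.

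Next I would promote $\tau$ to a single-alphabet admissible group cellular automaton $\tau_S \colon \Sigma(S) \to S^G$. Mimicking the construction in Section~\ref{s:proof-thm-b}, define the local defining map $\mu_S \colon \Sigma(S)_M \to S(k)$ by $\mu_S(s) \coloneqq (\mu(x), y(1_G))$, where $s(g) = (x(g), y(g))$ under $S(k) = A \times B$; this is a $k$-homomorphism of algebraic groups because $\mu$ is, and hence an admissible homomorphism (Example~\ref{r:ad-ca}). The induced cellular automaton $\tau_S$ then satisfies $\tau_S(x,y) = (\tau(x), y)$ for $(x,y) \in \Sigma(S)$, so $\tau_S$ is injective (since $\tau$ is), and $\tau_S(\Delta(S)) = \pi_G^{-1}(\tau(\Delta))$, whence $\tau(\Delta)$ is of finite type, resp. sofic, if and only if $\tau_S(\Delta(S))$ is. Applying Theorem~\ref{t:main-2-intro-admissible-general} to $G$, the admissible Artinian group structure $S(k)$, the closed admissible group subshift $\Sigma(S)$, the injective admissible group cellular automaton $\tau_S$, and the subshift $\Delta(S) \subset \Sigma(S)$ yields that $\tau_S(\Delta(S))$ is of finite type, resp. sofic, if and only if $\Delta(S)$ is; chaining the three equivalences gives the claim.

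The main obstacle I anticipate is not in the reduction bookkeeping (which is routine and parallels Section~\ref{s:proof-thm-b}) but in being certain that every ingredient of Theorem~\ref{t:main-2-intro-admissible-general} genuinely avoids the uncountability hypothesis: one must check that the left-reversibility result Theorem~\ref{l:techno-left-inverse} and its lemma Lemma~\ref{l:techincal-left-inverse}, which are the engine behind Theorem~\ref{t:main-2-intro-admissible} and its converse, rely only on (a) the descending chain condition for admissible subgroups — valid for algebraic groups over any field since $V^n$ is a Noetherian scheme — and (b) the closedness of $\Sigma$ in the prodiscrete topology, together with the fact that a compatible system $(u_n)$ with $u_n \in \Sigma_{ME_n}$ glues to a configuration in $\overline{\Sigma} = \Sigma$. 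Both are insensitive to $|k|$, so the argument goes through; I would state this explicitly as the reason Theorem~\ref{t:sym-grp-var-inj} strengthens Theorem~\ref{t:sft-image-intro-main-alg} by dropping uncountability in the group-alphabet case.
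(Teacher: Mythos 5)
Your proposal is correct and follows essentially the same route as the paper: reduce to a single alphabet via the fibered product $S = X \times_k Y$ exactly as in the proof of Theorem~\ref{t:main-corollary-admissible-intro}, observe via Example~\ref{ex:canonical-admissible} and Example~\ref{r:ad-ca} that the data become a closed admissible group subshift and an injective admissible group cellular automaton, and apply Theorem~\ref{t:main-2-intro-admissible-general}. Your explicit justification that the admissible-Artinian machinery never uses uncountability of $k$ is a welcome elaboration of a point the paper leaves implicit, but it does not change the argument.
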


\begin{proof}
It suffices to apply  Theorem~\ref{t:main-2-intro-admissible-general} after a reduction procedure described in the proof of Theorem~\ref{t:main-corollary-admissible-intro} to reduce to the case when $X=Y$. We only remark here that instead of taking the direct sum $S$ of the two module alphabets as in the proof of Theorem~\ref{t:main-corollary-admissible-intro}, we simply consider the fibered product $S= X \times_k Y$ which is an algebraic group over $k$. 
\par 
Note that by Remark~\ref{r:ad-ca}, we find that $\tau$ is an admissible group cellular automata with respect to the admissible Artinian group structures associated with algebraic groups described in Example~\ref{ex:canonical-admissible}. Likewise, $\Sigma$ is a closed admissible group subshift of $X(k)^G$. 
The proof is complete. 
\end{proof}

\bibliographystyle{siam}

\begin{thebibliography}{10}
  
 \bibitem{ara}
{\sc P.~Ara, K.~C.~O'Meara and F. ~Perera}, 
{\em Stable finiteness of group rings in arbitrary characteristic}, 
Adv. Math. 170 (2002), pp.~224--238.


\bibitem{bartholdi-kielak}
{\sc L.~Bartholdi}, 
{\em Amenability of groups is characterized by Myhill's Theorem. With an appendix by D. Kielak}, 
J. Eur. Math. Soc. vol. 21, Issue 10 (2019), pp. 3191--3197. 
DOI: 10.4171/JEMS/900.  
 
  
\bibitem{csc-sofic-linear}
{\sc T.~Ceccherini-Silberstein and M.~Coornaert}, {\em Injective linear cellular automata and sofic groups}, 
Israel J. Math. 161 (2007), pp.~1--15. 
 


 \bibitem{csc-monoid-surj}
\leavevmode\vrule height 2pt depth -1.6pt width 30pt, 
{\em On surjunctive monoids}, Internat. J. Algebra Comput.,  \textbf{25} (2015), pp.~567--606. 

 
  
   
\bibitem{cscp-alg-ca}
{\sc T.~Ceccherini-Silberstein, M.~Coornaert, and X.~K. Phung}, {\em On
  injective cellular automata over schemes}, Comm. Algebra 47 (2019), no.~11, pp.~4824--4852. DOI:
10.1080/00927872.2019.1602872.

\bibitem{cscp-alg-goe}
\leavevmode\vrule height 2pt depth -1.6pt width 30pt,  
{\em On the Garden of Eden theorem for endomorphisms of symbolic algebraic varieties}, 
 Pacific J. Math. 306 (2020), no. 1, pp~31--66. 



\bibitem{cscp-invariant-ca-alg}
\leavevmode\vrule height 2pt depth -1.6pt width 30pt,
{\em Invariant sets and nilpotency of endomorphisms of algebraic sofic shifts}, preprint, arXiv:2010.01967

 \bibitem{cscp-jpaa}
\leavevmode\vrule height 2pt depth -1.6pt width 30pt,
{\em On linear shifts of finite type and their endomorphisms}, to appear in Journal of Pure and Applied Algebra. 	arXiv:2011.14191 
   
   \bibitem{doucha}
{\sc M.~Doucha and J.~Gismatullin}, {\em
  On Dual surjunctivity and applications}, to appear in Groups Geom. Dyn. arXiv:2008.10565


\bibitem{elek}
{\sc G.~Elek and A.~Szab\' o}, 
{\em Sofic groups and direct finiteness}, 
J. of Algebra 280 (2004), pp.~426-434. 

\bibitem{gromov-esav}
{\sc M.~Gromov}, {\em Endomorphisms of symbolic algebraic varieties}, J. Eur.
  Math. Soc. (JEMS), 1 (1999), pp.~109--197.

\bibitem{grothendieck-ega-1-1}
{\sc A.~Grothendieck}
{\em \'{E}l\'ements de g\'eom\'etrie alg\'ebrique. I. Le langage des sch\' emas},
Inst. Hautes Etudes Sci. Publ. Math., (1960), p.~228.
 


\bibitem{gottschalk}
{\sc W.H.~Gottschalk}, 
{\em Some general dynamical notions}, 
Recent advances in topological dynamics, Springer, Berlin, 1973, pp. 120--125. Lecture Notes in Math. Vol. 318.



\bibitem{kap}
{\sc I.~Kaplansky}, 
{\em Fields and Rings}, 
Chicago Lectures in Mathematics, University of Chicago Press, Chicago, 1969. 



\bibitem{lind-marcus}
{\sc D.~Lind, B.~Marcus}, 
{\em An introduction to symbolic dynamics and coding}, Cambridge University Press, Cambridge, 1995. 
 



\bibitem{milne}
{\sc J.S.~Milne}, 
{\em Algebraic groups}, 
Cambridge Studies in Advanced Mathematics, 170 (2017), 
Cambridge University Press, Cambridge. 
\newblock The theory of group schemes of finite type over a field. 


\bibitem{moore}
{\sc E.~F. Moore}, {\em Machine models of self-reproduction}, vol.~14 of Proc.
  Symp. Appl. Math., American Mathematical Society, Providence, 1963,
  pp.~17--34.

\bibitem{myhill}
{\sc J.~Myhill}, {\em The converse of {M}oore's {G}arden-of-{E}den theorem},
  Proc. Amer. Math. Soc., 14 (1963), pp.~685--686.
 

 
\bibitem{neumann}
{\sc J.~von~Neumann}, 
{\em The general and logical theory of automata}, Cerebral Mechanisms in Behavior. The Hixon Symposium, John Wiley \& Sons Inc., New York, N. Y., 1951, pp.~1--31; discussion, pp. 32--41. MR0045446 (13,586a)


\bibitem{phung-2020}
{\sc X.K.~Phung}, 
{\em On sofic groups, Kaplansky's conjectures, and endomorphisms of pro-algebraic groups}, Journal of Algebra, 562 (2020), pp.~537--586. 

\bibitem{phung-dcds}
\leavevmode\vrule height 2pt depth -1.6pt width 30pt,
{\em Shadowing for families of endomorphisms of generalized group shifts}, Discrete \& Continuous Dynamical Systems, 2022, 42 (1) : 285--299. doi: 10.3934/dcds.2021116

\bibitem{phung-israel}
\leavevmode\vrule height 2pt depth -1.6pt width 30pt,
{\em On dynamical finiteness properties of algebraic group shifts}, to appear in Israel Journal of Mathematics. arXiv:2010.04035 


\bibitem{phung-post-surjective}
\leavevmode\vrule height 2pt depth -1.6pt width 30pt,
{\em On symbolic group varieties and dual surjunctivity}, preprint, 2021. 	arXiv:2111.02588

\bibitem{phung-geometric}
\leavevmode\vrule height 2pt depth -1.6pt width 30pt,
{\em A geometric generalization of Kaplansky's direct finiteness conjecture}, preprint, 2021. arXiv:2111.07930 

\bibitem{phung-weakly}
\leavevmode\vrule height 2pt depth -1.6pt width 30pt, 
{\em Weakly surjunctive groups and symbolic group varieties}, preprint, 2021. 	arXiv:2111.13607


\bibitem{phung-LEF}
\leavevmode\vrule height 2pt depth -1.6pt width 30pt,
{\em LEF-groups and endomorphisms of symbolic varieties}, 
preprint, 2021. arXiv:2112.00603


\bibitem{weiss-sofic-shift}
{\sc B.~Weiss}, 
{\em Subshifts of finite type and sofic systems}, Monatsh. Math., 77 (5), 1973, pp.~462--474. 
 
 





\end{thebibliography}

\end{document}